\documentclass[preprint]{imsart}

	
\usepackage{amssymb,amsbsy,amsmath,amscd,amsthm,amsfonts,MnSymbol}
\usepackage{cite}
\usepackage[utf8]{inputenc}
\usepackage{enumerate,hyperref}
\usepackage{bbm, dsfont}
\usepackage{graphicx}

\hypersetup{
hidelinks
}

\newtheorem{theorem}{Theorem}

\newtheorem{lemma}{Lemma}
\newtheorem{remark}{Remark}

\newtheorem{corollary}{Corollary}
\newtheorem{assumption}{Assumption}

\newcommand{\R}{\mathbb R}
\newcommand{\PP}{\mathbb P}
\newcommand{\E}{\mathbb E}
\newcommand{\K}{\mathcal K_d}

\newcommand{\Sd}{\mathbb S^{d-1}}
\newcommand{\dH}{\textsf{d}_{\textsf H}}

\newcommand*\diff{\mathop{}\!\mathrm{d}}
\newcommand{\DS}{\displaystyle}

\sloppy

\begin{document}

\begin{frontmatter}

\title{Estimation of convex supports from noisy measurements}
\runtitle{Estimation of convex supports from noisy measurements}

\begin{aug}
  \author{Victor-Emmanuel Brunel\ead[label=e1]{vebrunel@mit.edu}}
  \address{Department of Mathematics, Massachusetts Institute of Technology\\ 
           \printead{e1}}

  \author{Jason M. Klusowski\ead[label=e2]{jason.klusowski@yale.edu}}
  \and
  \author{Dana Yang\ead[label=e3]{xiaoqian.yang@yale.edu}}

  \address{Department of Statistics and Data Science, Yale University\\
          \printead{e2,e3}}


  \runauthor{Brunel, Klusowski, Yang}

\end{aug}

\begin{abstract}
A popular class of problem in statistics deals with estimating the support of a density from $ n $ observations drawn at random from a $ d $-dimensional distribution. The one-dimensional case reduces to estimating the end points of a univariate density. In practice, an experimenter may only have access to a noisy version of the original data. Therefore, a more realistic model allows for the observations to be contaminated with additive noise.

In this paper, we consider estimation of convex bodies when the additive noise is distributed according to a multivariate Gaussian distribution, even though our techniques could easily be adapted to other noise distributions. Unlike standard methods in deconvolution that are implemented by thresholding a kernel density estimate, our method avoids tuning parameters and Fourier transforms altogether. 
We show that our estimator, computable in $(O(\ln n))^{(d-1)/2}$ time, converges at a rate of $ O_d(\log\log n/\sqrt{\log n}) $ in Hausdorff distance, in accordance with the polylogarithmic rates encountered in Gaussian deconvolution problems. Part of our analysis also involves the optimality of the proposed estimator. We provide a lower bound 
for the minimax rate of estimation in Hausdorff distance that is $ \Omega_d(1/\log^2 n) $.
\end{abstract}

\begin{keyword}[class=AMS]
\kwd[Primary ]{62H12}
\kwd[; secondary ]{62G30}
\end{keyword}
\begin{keyword}[class=KWD]
\kwd{Convex bodies, support estimation, support function, order statistics}
\end{keyword}

\end{frontmatter}

\section{Preliminaries}

\subsection{Introduction}

The problem of estimating the support of a distribution, given i.i.d. samples, poses both statistical and computational questions. When the support of the distribution is known to be convex, geometric methods have been borrowed from stochastic and convex geometry with the use of random polytopes since the seminal works \cite{Renyi1963,Renyi1964}. When the distribution of the samples is uniform on a convex body, estimation in a minimax setup has been tackled in \cite{KTbook} (see also the references therein). There, the natural estimator defined as the convex hull of the samples (which is referred to as \emph{random polytope} in the stochastic geometry literature) is shown to attain the minimax rate of convergence on the class of convex bodies, under the Nikodym metric. 


When the samples are still supported on a convex body but their distribution is no longer uniform, \cite{Brunel2017-2015} studies the performance of the random polytope as an estimator of the convex support under the Nikodym metric, whereas \cite{BrunelHausdorff2017} focuses on the Hausdorff metric. In the latter, computational issues are addressed in higher dimensions. Namely, determining the list of vertices of the convex hull of $n$ points in dimension $d\geq 2$ is very expensive, namely, exponential in $d\log n$ (see \cite{Chazelle1993}). In \cite{BrunelHausdorff2017}, a randomized algorithm produces an approximation of the random polytope that achieves a trade-off between computational cost and statistical accuracy. The approximation is given in terms of a membership oracle, which is a very desirable feature for the computation/approximation of a convex body.

Both works \cite{Brunel2017-2015,BrunelHausdorff2017} assume that one has access to direct samples. Here, we are interested in the case when samples are contaminated, more specifically, subject to measurement errors. In \cite{Marteau2015}, a closely related problem is studied, where two independent contaminated samples are observed, and one wants to estimate the set where $f-g$ is positive, where $f$ and $g$ are the respective densities of the two samples. In that work, the contamination is modeled as an additive noise with known distribution, and some techniques borrowed from inverse problems are used. The main drawback is that the estimator is not tractable and it only gives a theoretical benchmark for minimax estimation.

Goldenshluger and Tsybakov \cite{Goldenshluger2004} study the problem of estimating the endpoint of a univariate distribution, given samples contaminated with additive noise. Their analysis suggests that their estimator is optimal in a minimax sense and its computation is straightforward. In our work, we first extend their result, which then we lift to a higher dimensional setup: that of estimating the convex support of a uniform distribution, given samples that are contaminated with additive Gaussian noise. Our method relies on projecting the data points along a finite collection of unit vectors. Unlike in \cite{Marteau2015}, we give an explicit form for our estimator. In addition, our estimator is tractable when the ambient dimension is not too large. If the dimension is too high, the number of steps required to compute a membership oracle for our estimator becomes exponentially large in the dimension: Namely, of order $(O(\ln n))^{(d-1)/2}$.

\subsection{Notation}

In this work, $d\geq 2$ is a fixed integer standing for the dimension of the ambient Euclidean space $\R^d$. The Euclidean ball with center $a\in\R^d$ and radius $r\geq 0$ is denoted by $B_d(a,r)$. The unit sphere in $\R^d$ is denoted by $\Sd$ and $\kappa_d$ stands for the volume of the unit Euclidean ball. 

We refer to convex and compact sets with nonempty interior in $\R^d$ as convex bodies. The collection of all convex bodies in $\R^d$ is denoted by $\K$. Let $\sigma^2>0$ and $n\geq 1$. If $X_1,\ldots,X_n$ are i.i.d. random uniform points in a convex body $G$ and $\varepsilon_1,\ldots,\varepsilon_n$ are i.i.d. $d$-dimensional centered Gaussian random vectors with covariance matrix $\sigma^2 I$, where $I$ is the $d\times d$ identity matrix, independent of the $X_j$'s, we denote by $\PP_G$ the joint distribution of $X_1+\varepsilon_1,\ldots,X_n+\varepsilon_n$ and by $\E_G$ the corresponding expectation operator (we omit the dependency on $n$ and $\sigma^2$ for simplicity).

The support function of a convex set $G\subseteq\R^d$ is defined as $\DS h_G(u)=\sup_{x\in G}\langle u,x\rangle, u\in\R^d$, where $\langle\cdot,\cdot\rangle$ is the canonical scalar product in $\R^d$: It is the largest signed distance between the origin and a supporting hyperplane of $G$ orthogonal to $u$. 

The Hausdorff distance between two sets $A,B\subseteq\R^d$ is $\DS \dH(A,B)=\inf\{\varepsilon>0 : G_1\subseteq G_2+\varepsilon B_d(0,1) \mbox{ and } G_2\subseteq G_1+\varepsilon B_d(0,1)\}$. If $A$ and $B$ are convex bodies, it can be written in terms of their support functions: $\DS \dH(A,B)=\sup_{u\in\Sd}\left|h_A(u)-h_B(u)\right|$. 

For $ f $ in $ L^1(\mathbb{R}^d) $, let $\DS \mathcal{F}[f](t) = \int_{\mathbb{R}^d}e^{i\langle t, x \rangle}f(x)dx$ denote the Fourier transform of $f$.

The total variation distance between two distributions $ P $ and $ Q $ having densities $ p $ and $ q $ with respect to a dominating measure $ \mu $ is defined by $ \text{TV}(P, Q) = \frac{1}{2}\int |p - q|d\mu $.

The Lebesgue measure of a measurable, bounded set $ A $ in $ \mathbb{R}^d $ is denoted by $ |A| $. For a vector $ x = (x_1, x_2, \dots, x_d)\in\R^d $, we define $ \|x\|_p = \left(\sum_{i=1}^d |x_i|^p \right)^{1/p} $ for $ p \geq 1 $ and $ \|x\|_{\infty} = \sup_{1 \leq i\leq d} |x_i| $. For a function, $ f $ defined on a set $ A $, let $ \|f\|_{\infty} = \sup_{x\in A}|f(x)| $. The Nikodym distance between two measurable, bounded sets $ A $ and $ B $ is defined by $ \mathsf{d}_{\Delta}(A, B) = |A \Delta B| $.

We use standard big-$O$ notations: For any positive sequences $\{a_n\}$ and $\{b_n\}$, $a_n=O(b_n)$ or $a_n \lesssim b_n$ if $a_n \leq C b_n$ for some absolute constant $C>0$,
$a_n=o(b_n)$ or $a_n \ll b_n$ if $\lim a_n/b_n = 0$. Finally, we write $a_n \asymp b_n$ when both $a_n\gtrsim b_n$ and $a_n\lesssim b_n$ hold.
Furthermore, the subscript in $a_n=O_{r}(b_n)$ means $a_n \leq C_r b_n$ for some constant $C_r$ depending on the parameter $r$ only. We write $ a_n \propto b_n $ when $ a_n = Cb_n $ for some absolute constant $ C $. We let $ \phi_{\sigma} $ denote the Gaussian density with mean zero and variance $ \sigma^2 $, i.e., $ \phi_{\sigma}(x) = \frac{1}{\sqrt{2\pi}\sigma}e^{-x^2/(2\sigma^2)} $ for all $ x \in \R $.

\subsection{Model and outline}

In what follows, we consider the problem of estimating a convex body from noisy observations. More formally, suppose we have access to independent observations 
\begin{equation} \label{Model}
 Y_j = X_j + \varepsilon_j, \quad j=1, \dots, n,
\end{equation}
where $X_1,\ldots,X_n$ are i.i.d. uniform random points in an unknown convex body $G$ and $\varepsilon_1,\ldots,\varepsilon_n$ are i.i.d. Gaussian random vectors with zero mean and covariance matrix $\sigma^2 I$, independent of $X_1,\ldots,X_n$. In the sequel, we assume that $\sigma^2$ is a fixed and known positive number. The goal is to estimate $ G $ using $ Y_1,\dots,Y_n $. This can be seen as an inverse problem: the object of interest is a special feature (here, the support) of a density that is observed up to a convolution with a Gaussian distribution. Our approach will not use the path of inverse problems, but instead, will be essentially based on geometric arguments.

The error of an estimator $\hat G_n$ of $G$ is defined as $\E_G\left[\dH(\hat G_n,G)\right]$. Let $\mathcal C\subseteq\K$ be a subclass of the class of all convex bodies in $\R^d$. The risk of an estimator $\hat G_n$ on the class $\mathcal C$ is $\DS \sup_{G\in\mathcal C} \E_G\left[\dH(\hat G_n,G)\right]$ and the minimax risk on $\mathcal C$ is defined as
$$\mathcal R_n(\mathcal C)=\inf_{\hat G}\sup_{G\in\mathcal C}\E_G\left[\dH(\hat G,G)\right],$$
where the infimum is taken over all estimators $\hat G$ based on $Y_1,\ldots,Y_n$. The minimax rate on the class $\mathcal C$ is the speed at which $\mathcal R_n(\mathcal C)$ goes to zero.

Our strategy for estimating $G$ avoids standard methods from inverse problems that would require Fourier transforms and tuning parameters. To give intuition for our procedure, first observe that a convex set can be represented in terms of its support function via
\begin{equation*}
G = \{ x\in\mathbb{R}^d: \langle u, x \rangle \leq h_G(u)\; \text{for all} \; u\in\Sd \}.
\end{equation*}
If we can find a suitable way of estimating $ h_G $, say by $ \hat{h}_n $, then there is hope that an estimator of the form
\begin{equation*}
\hat{G}_n = \{ x\in\mathbb{R}^d: \langle u, x \rangle \leq \hat{h}_n(u)\; \text{for all} \; u\in\Sd \}
\end{equation*}
will perform well. This is the core idea of our procedure: We project the data points $Y_1,\ldots,Y_n$ along unit vectors and for all such $u\in\Sd$, we estimate the endpoint of the distribution of $\langle u,X_1\rangle$ given the one dimensional sample $\langle u,Y_1\rangle,\ldots,\langle u,Y_n\rangle$. 

Section \ref{Section1D} is devoted to the study of the one dimensional case, where we extend the results proven in \cite{Goldenshluger2004}. The one-dimensional case reduces to estimating the end-point of a univariate density. This problem has been extensively studied in the noiseless case \cite{Hall1982,Mason1989} and more recently as an inverse problem \cite{Hall2002,Goldenshluger2004}. In \cite{Goldenshluger2004}, it is assumed that the density of the (one-dimensional) $X_j$'s is \emph{exactly} equal to a polynomial in a neighborhood of the endpoint of the support. We extend their results to the case when the distribution function is only bounded by two polynomials whose degrees may differ, in the vicinity of the endpoint.

In Section \ref{SectiondD}, we use these one dimensional results in order to define our estimator of the support $G$ if the $X_j$'s and to bound its risk on a certain subclass of $\K$. We show that our estimator nearly attains the minimax rate on that class, up to logarithmic factors. 

Intermediate lemmas and proofs of corollaries are deferred to Section \ref{SectionProofs}.

\section{Estimation of the endpoint of a distribution with contaminated samples}\label{Section1D}

Let $\varepsilon_1,\ldots,\varepsilon_n$ be i.i.d. centered Gaussian random variables. Then, the maximum $\max_{1\leq j \leq n}\varepsilon_j$ concentrates around $\sqrt{2\sigma^2\ln n}$, where $\sigma^2=\E[\varepsilon_1^2]$. Our first result shows the same remains true if one adds i.i.d. nonpositive random variables to the $\varepsilon_j$'s, as long as their cumulative distribution function increases polynomially near zero. As a byproduct, one can estimate the endpoint of a distribution with polynomial decay near its boundary by substracting a deterministic bias from the maximum of the observations. In the sequel, set $b_n=\sqrt{2\sigma^2\ln n}$. 

\begin{theorem} \label{Theorem1}

Let $X$ be a random variable with cumulative distribution function $F$ and $\varepsilon$ be a centered Gaussian random variable with variance $\sigma^2>0$, independent of $X$. Let $Y=X+\varepsilon$ and consider a sequence $Y_1,Y_2,\ldots$ of independent copies of $Y$ and define $M_n=\max \{Y_1,\ldots,Y_n\}$, for all $n\geq 1$. Assume that there exist real numbers $\theta_F\in\R$, $\alpha\geq\beta\geq 0$, $r>0$ and $L>0$ such that the following is true: 
\begin{equation*}
	L^{-1}t^\alpha \leq 1-F(\theta_F-t)\leq Lt^{\beta}, \quad \forall t\in [0,r].
\end{equation*}
Then, there exist $n_0\geq 1$ and $c_0,c_1,c_2>0$ that depend on $\alpha,\beta$, $L$, $r$ and $\sigma^2$ only, such that for all $n\geq  n_0$ and $t>0$,
\begin{equation*}
	\PP\left[|M_n-b_n-\theta_F|>\frac{t+c_0\ln \ln n}{b_n}\right]\leq c_1e^{-\frac{t}{2\sigma^2}}+e^{-c_2 n}.
\end{equation*}
\end{theorem}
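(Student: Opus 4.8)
The plan is to translate so that $\theta_F=0$ (hence $X\le 0$ almost surely and $Y=X+\varepsilon\le\varepsilon$), abbreviate $u=t+c_0\ln\ln n$ and $s=b_n-u/b_n$, and work throughout with $n$ large enough that $u\ge 1$, $1/b_n\le\min\{r,1,2\sigma\}$ and $b_n\ge 2\sigma$. I split the event into the two one-sided pieces $\{M_n>b_n+u/b_n\}$ and $\{M_n<s\}$. The upper piece is easy: $M_n\le\max_{j\le n}\varepsilon_j$ and a union bound give $\PP[M_n>b_n+u/b_n]\le n\,\PP[\varepsilon>b_n+u/b_n]$, and since $(b_n+u/b_n)^2\ge b_n^2+2u$ while $n\,e^{-b_n^2/(2\sigma^2)}=1$, the Mills bound yields $\PP[M_n>b_n+u/b_n]\le \frac{\sigma}{b_n\sqrt{2\pi}}e^{-u/\sigma^2}\le\tfrac12 e^{-t/(2\sigma^2)}$ for $n$ large. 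This uses only $X\le 0$; in fact the upper polynomial bound on $F$ is never used in this theorem.

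For the lower piece I would use $\PP[M_n<s]\le(1-\PP[Y\ge s])^n\le e^{-n\PP[Y\ge s]}$, so everything comes down to a lower bound on $\PP[Y\ge s]$. Here the key device is the small-ball decomposition, valid by independence and the \emph{lower} hypothesis on $F$,
\begin{equation*}
\PP[Y\ge s]\ \ge\ \PP[X\ge-\delta]\,\PP[\varepsilon\ge s+\delta]\ \ge\ L^{-1}\delta^\alpha\,\PP[\varepsilon\ge s+\delta],\qquad\delta\in(0,r],
\end{equation*}
with the choice $\delta=1/b_n$: small enough that $s+\delta$ stays within $O(1)$ of $b_n$, so the Gaussian factor is of order $n^{-1}e^{u/\sigma^2}$, at the cost of only the polylogarithmic factor $L^{-1}b_n^{-\alpha}$. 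In the principal regime $s>\sigma$, $u\le b_n^2/2$ (which captures all $t$ up to $\asymp\sigma^2\ln n$), the Gaussian lower tail estimate applies at $v=s+1/b_n\in[\sigma,b_n]$, and $(s+1/b_n)^2=b_n^2-2u+u^2/b_n^2+O(1)$ together with $u^2/(2\sigma^2 b_n^2)\le u/(4\sigma^2)$ gives
\begin{equation*}
n\,\PP[Y\ge s]\ \gtrsim\ \frac{1}{b_n^{\alpha+1}}\,e^{3u/(4\sigma^2)}\ =\ \frac{(\ln n)^{3c_0/(4\sigma^2)}}{(2\sigma^2\ln n)^{(\alpha+1)/2}}\,e^{3t/(4\sigma^2)}.
\end{equation*}
Taking $c_0$ so large that $3c_0/(4\sigma^2)\ge(\alpha+1)/2$ makes the prefactor bounded below, hence $n\,\PP[Y\ge s]\gtrsim e^{3t/(4\sigma^2)}$ and $\PP[M_n<s]\le e^{-c\,e^{3t/(4\sigma^2)}}\le c_1 e^{-t/(2\sigma^2)}$, the last step because $t\mapsto c\,e^{3t/(4\sigma^2)}-t/(2\sigma^2)$ is bounded below on $[0,\infty)$.

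The remaining range, where $s$ is no longer of order $b_n$, is handled crudely. If $s\le\sigma$ (that is, $u\ge b_n^2-\sigma b_n$) then $\PP[Y\ge s]\ge\PP[Y\ge\sigma]\ge L^{-1}r^\alpha\,\PP[\varepsilon\ge\sigma+r]=:p_0>0$, so $\PP[M_n<s]\le e^{-p_0 n}$ --- this furnishes the $e^{-c_2 n}$ term with $c_2=p_0$. If $\sigma<s$ but $u>b_n^2/2$, the same decomposition with $\delta=1/b_n$ gives $n\,\PP[Y\ge s]\gtrsim b_n^{-(\alpha+1)}\,n\,e^{-s^2/(2\sigma^2)}=b_n^{-(\alpha+1)}e^{f(u)}$ with $f(u)=u/\sigma^2-u^2/(2\sigma^2 b_n^2)$; since $f$ is increasing on $[0,b_n^2]$, $f(u)\ge f(b_n^2/2)=3b_n^2/(8\sigma^2)$, so $n\,\PP[Y\ge s]\gtrsim b_n^{-(\alpha+1)}n^{3/4}\ge\sqrt n$ for $n$ large, whence $\PP[M_n<s]\le e^{-\sqrt n}\le 1/n<e^{-t/(2\sigma^2)}$ because $t<u<b_n^2=2\sigma^2\ln n$ in this range. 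Adding the three contributions, enlarging $c_1$ to absorb the $\tfrac12 e^{-t/(2\sigma^2)}$ and $1/n$ terms, and letting $n_0$ be the largest threshold invoked completes the argument; all constants depend only on $\alpha,\beta,L,r,\sigma^2$.

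\textbf{The main difficulty} is the calibration of the $\ln\ln n$ term in the bias. To keep the Gaussian factor of order $n^{-1}e^{u/\sigma^2}$ one must localize $X$ in a window of width $\asymp 1/b_n$, which unavoidably costs the polylog factor $b_n^{-\alpha}\asymp(\ln n)^{-\alpha/2}$ in the success probability; this is recovered only because the bias term contributes $e^{3c_0(\ln\ln n)/(4\sigma^2)}=(\ln n)^{3c_0/(4\sigma^2)}$, which forces $c_0\gtrsim\alpha\sigma^2$. A secondary nuisance is the quadratic correction $u^2/(2\sigma^2 b_n^2)$ in the exponent, harmless as long as $u\lesssim b_n^2$; the complementary regime $u\gtrsim b_n^2$ must then be treated separately and only yields the $e^{-c_2 n}$ term.
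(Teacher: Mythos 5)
Your argument is correct, and it reaches the theorem by a genuinely more elementary route than the paper's. The paper's proof runs entirely through Lemma \ref{Lemmacdf}: it derives two-sided tails for the cdf $G$ of $Y-\theta_F$ by splitting the convolution integral $\int_{-\infty}^0(1-F(t))\phi_\sigma(x-t)\,\diff t$, and then substitutes into the exact identity $\PP[|M_n-\theta_F-b_n|>x]=1-G(b_n+x)^n+G(b_n-x)^n$. You bypass that lemma on both sides: for the upward deviation you use the stochastic domination $M_n\le\theta_F+\max_j\varepsilon_j$ together with a union bound and Mills' ratio, and for the downward deviation you replace the integral lower bound on $1-G$ by the product bound $\PP[Y\ge s]\ge\PP[X\ge\theta_F-\delta]\,\PP[\varepsilon\ge s-\theta_F+\delta]$ with $\delta=1/b_n$, which recovers the paper's $x^{-(\alpha+1)}e^{-x^2/(2\sigma^2)}$ order up to constants. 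The regime structure and the calibration of $c_0$ against the $(\ln n)^{-(\alpha+1)/2}$ polylogarithmic loss are the same in spirit (the paper takes $c_0=(\alpha+1)\sigma^2$; you need $c_0\ge 2(\alpha+1)\sigma^2/3$); your extra intermediate regime ($u>b_n^2/2$ but $s>\sigma$) is only an artifact of bounding $u^2/(2\sigma^2 b_n^2)\le u/(4\sigma^2)$, where the paper instead discards $-x^2/(2\sigma^2)$ against $xb_n/\sigma^2$ and needs just two cases. One caveat about your remark that the upper polynomial bound on $F$ is never used: your domination step does require $X\le\theta_F$ almost surely, which for $\beta>0$ is precisely what that bound yields at $t=0$; the paper makes the same implicit assumption when it writes the convolution over $(-\infty,0]$ only, so this is not a gap relative to the paper. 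What your version buys is self-containedness and the observation that only the lower (degree-$\alpha$) condition drives the rate; what the paper's Lemma \ref{Lemmacdf} buys is a sharper, reusable two-sided description of the contaminated tail that also feeds Theorem \ref{Theorem1BIS}.
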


The expressions of $n_0$ and of the constants $c_1$ and $c_2$ can be easily deduced from the proof of the theorem.

\begin{proof}[Proof of Theorem \ref{Theorem1}]

Denote by $G$ the cumulative distribution function of $Y_1-\theta_F$. We use the following lemma, which we prove in Section \ref{SectionProofLemmas}. 
\begin{lemma} \label{Lemmacdf}
	There exist two positive constants $c$ and $C$ that depend only on $r, L$ and $\alpha$, such that for all $x\geq \sigma^2/r$,
\begin{equation*}
	\frac{ce^{-\frac{x^2}{2\sigma^2}}}{x^{\alpha+1}}\leq 1-G(x)\leq \frac{Ce^{-\frac{x^2}{2\sigma^2}}}{x^{\beta+1}}.
\end{equation*}
\end{lemma}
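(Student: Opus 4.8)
Proof proposal for Lemma \ref{Lemmacdf}.

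The plan is to write $1 - G(x) = \PP[X + \varepsilon - \theta_F > x]$ and condition on $X$. Setting $Z = \theta_F - X \geq 0$ on the event that matters (more precisely, writing $S = 1 - F(\theta_F - \cdot)$ for the tail of $X$ near $\theta_F$), we get
\begin{equation*}
1 - G(x) = \int \PP[\varepsilon > x + s]\, d\mu(s) = \int \overline{\Phi}\!\left(\frac{x+s}{\sigma}\right) d\mu(s),
\end{equation*}
where $\mu$ is the law of $Z = \theta_F - X$ and $\overline{\Phi}$ is the standard normal survival function. The hypothesis on $F$ controls $\mu$ only on the interval $[0, r]$ through $L^{-1} t^\alpha \le \mu([0,t]) \le L t^\beta$; outside $[0, r]$ we have no quantitative control, so the first step is to argue that the contribution of $\{s > r\}$ is negligible. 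Indeed, on that region $\overline{\Phi}((x+s)/\sigma) \le \overline{\Phi}((x+r)/\sigma)$, which is exponentially smaller than the target lower bound $c e^{-x^2/(2\sigma^2)}/x^{\alpha+1}$ once $x \ge \sigma^2/r$ (one gains a factor roughly $e^{-rx/\sigma^2}$, beating any polynomial), and it is also dominated by the claimed upper bound; so it can be absorbed into the constants. It remains to estimate $\int_0^r \overline{\Phi}((x+s)/\sigma)\, d\mu(s)$.

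For the remaining integral I would use the elementary Gaussian tail bounds $\frac{1}{\sqrt{2\pi}}\frac{y}{y^2+1} e^{-y^2/2} \le \overline{\Phi}(y) \le \frac{1}{\sqrt{2\pi}\, y} e^{-y^2/2}$ with $y = (x+s)/\sigma$, reducing everything to estimating $\int_0^r e^{-(x+s)^2/(2\sigma^2)}\, d\mu(s)$. Expanding $(x+s)^2 = x^2 + 2xs + s^2$ pulls out the factor $e^{-x^2/(2\sigma^2)}$, and since $s \in [0, r]$ the factor $e^{-s^2/(2\sigma^2)}$ is bounded above and below by constants, so the problem becomes: estimate $\int_0^r e^{-xs/\sigma^2}\, d\mu(s)$ up to constants, and track the polynomial prefactors $(x+s)/\sigma \asymp x/\sigma$. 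This is a Laplace-transform-type integral of the measure $\mu$ whose distribution function near $0$ is sandwiched between $t^\alpha$ and $t^\beta$ (up to the constant $L$). Integration by parts gives $\int_0^r e^{-xs/\sigma^2}\, d\mu(s) = e^{-rx/\sigma^2}\mu([0,r]) + \frac{x}{\sigma^2}\int_0^r e^{-xs/\sigma^2}\mu([0,s])\, ds$; substituting $u = xs/\sigma^2$ converts $\int_0^r e^{-xs/\sigma^2} s^\gamma\, ds$ into $(\sigma^2/x)^{\gamma+1}\int_0^{rx/\sigma^2} e^{-u} u^\gamma\, du$, and for $x \ge \sigma^2/r$ the truncated Gamma integral $\int_0^{rx/\sigma^2} e^{-u} u^\gamma\, du$ is bounded above by $\Gamma(\gamma+1)$ and below by $\int_0^1 e^{-u} u^\gamma\, du > 0$, both constants depending only on $\gamma$. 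Plugging $\gamma = \alpha$ for the lower bound on $\mu$ and $\gamma = \beta$ for the upper bound yields $\int_0^r e^{-xs/\sigma^2}\, d\mu(s) \gtrsim x^{-(\alpha+1)}$ and $\lesssim x^{-(\beta+1)}$ respectively, with constants depending only on $\alpha$, $\beta$, $L$, $r$, $\sigma^2$. Combining with the extra polynomial factor $1/x$ from the Gaussian tail gives the exponents $\alpha+1$ and $\beta+1$ in the statement — wait, I should double-check: the Gaussian tail already contributes one power of $1/x$, so to land on $x^{-(\alpha+1)}$ overall I need the Laplace integral to contribute $x^{-\alpha}$, which corresponds to $\gamma = \alpha - 1$; the cleanest bookkeeping is to lower-bound $\mu$ by its density-free tail estimate directly, i.e. use $\mu([s/2, s]) \ge \mu([0,s]) - \mu([0,s/2]) \ge L^{-1} s^\alpha - L(s/2)^\beta$, but since $\alpha \ge \beta$ this is not automatically positive, so instead I will simply carry out the integration-by-parts computation above carefully and let the exponents fall out — the point is that it is a routine one-variable estimate once the tail truncation is in place.

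The main obstacle, and the only genuinely delicate point, is the lack of any control on $\mu$ outside $[0, r]$: one must be careful that the lower bound $c e^{-x^2/(2\sigma^2)}/x^{\alpha+1}$ genuinely comes from the mass of $\mu$ on $[0, r]$ (which it does, since $\mu([0, r]) \ge L^{-1} r^\alpha > 0$), and that the restriction $x \ge \sigma^2/r$ is exactly what makes the truncated Gamma integrals uniformly bounded below and the $\{s > r\}$ tail exponentially negligible; the threshold $\sigma^2/r$ is natural because it is where $xs/\sigma^2$ crosses $1$ at the edge $s = r$. Everything else is bookkeeping with Gaussian tail bounds and a change of variables, and the constants $c, C$ are then explicit in terms of $\alpha, \beta, L, r, \sigma^2$.
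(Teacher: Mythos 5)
Your proposal is correct and follows essentially the same route as the paper's proof: the paper conditions on the noise rather than on $X$, writing $1-G(x)=\int_{-\infty}^{0}(1-F(t))\,\phi_\sigma(x-t)\,\diff t$ (one integration by parts away from your $\int\overline{\Phi}((x+s)/\sigma)\,\diff\mu(s)$), splits at distance $r$ from the endpoint exactly as you do, shows the far part is $O(e^{-x^2/(2\sigma^2)}e^{-xr/\sigma^2})$ and absorbs it with the same $e^{-u}u^{\gamma+1}\le((\gamma+1)/e)^{\gamma+1}$ trick, and reduces the near part to the same truncated Gamma integrals $\int_0^{rx/\sigma^2}u^{\gamma}e^{-u}\,\diff u$ via the substitution $u=xt/\sigma^2$, with the threshold $x\ge\sigma^2/r$ playing exactly the role you identify. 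Your exponent bookkeeping does close correctly, so the hesitation in your third paragraph is unfounded: the extra factor $x/\sigma^2$ produced by your integration by parts cancels one power of $(\sigma^2/x)^{\gamma+1}$, so taking $\gamma=\alpha$ (resp.\ $\gamma=\beta$) in $\mu([0,s])\gtrsim s^{\alpha}$ (resp.\ $\lesssim s^{\beta}$) yields the Laplace integral $\asymp x^{-\gamma}$, which combined with the $1/x$ from the Gaussian tail gives the claimed exponents $\alpha+1$ and $\beta+1$.
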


Let $x$ be a positive number and $n$ be a positive integer. Write that
\begin{equation} \label{Proof1_6}
	\PP\left[|M_n-\theta_F-b_n|>x\right] = 1-G(b_n+x)^n+G(b_n-x)^n.
\end{equation}

Let us first bound from below $G(b_n+x)^n$. Assume that $n$ is sufficiently large so that $b_n\geq r/\sigma^2$. By Lemma \ref{Lemmacdf},
\begin{align}
	G(b_n+x) & \geq 1-\frac{Ce^{-\frac{(b_n+x)^2}{2\sigma^2}}}{(b_n+x)^{\beta+1}} \geq 1-\frac{Ce^{-\frac{(b_n+x)^2}{2\sigma^2}}}{b_n^{\beta+1}} \nonumber \\ 
	& = 1-C\exp\left(-\frac{x^2}{2\sigma^2}-\frac{xb_n}{\sigma^2}-\frac{b_n^2}{2\sigma^2}-(\beta+1)\ln b_n\right) \label{Casa=b1} \\
	& \geq 1-C\exp\left(-\frac{xb_n}{\sigma^2}-\frac{b_n^2}{2\sigma^2}\right) \nonumber \\
	\label{Proof1_7} & = 1-\frac{C}{n}\exp\left(-\frac{xb_n}{\sigma^2}\right),
\end{align}
as long as $n$ is large enough so $\ln b_n\geq 0$. 

Note that for all $u\in[0,1/2]$, $\displaystyle{1-u\geq e^{-2(\ln 2)u}\geq 1-2(\ln 2)u}$. Hence, if $n$ is large enough, \eqref{Proof1_7} implies
\begin{equation} 
	\label{Proof1_12}G(b_n+x)^n \geq 1-2(\ln 2)Ce^{-\frac{xb_n}{2\sigma^2}}.
\end{equation}

Let us now bound from above $G(b_n-x)^n$. First, if $x\leq b_n-r/\sigma^2$, Lemma \ref{Lemmacdf} yields
\begin{align}
	G(b_n-x) & \leq 1-\frac{ce^{-\frac{(b_n-x)^2}{2\sigma^2}}}{(b_n-x)^{\alpha+1}} \leq 1-\frac{c}{b_n^{\alpha+1}}\exp\left(-\frac{x^2}{2\sigma^2}+\frac{xb_n}{\sigma^2}-\frac{b_n^2}{2\sigma^2}\right) \nonumber \\ 
	& \leq 1-c\exp\left(\frac{x b_n}{2\sigma^2}-\frac{b_n^2}{2\sigma^2}-(\alpha+1)\ln b_n\right) \label{casa=b2} \\
	\label{Proof1_11'} & = 1-\frac{ce^{B_1}}{n}\exp\left(\frac{x b_n}{2\sigma^2}-\frac{\alpha+1}{2}\ln \ln n\right),
\end{align}
where $B_1=(1/2)(\alpha+1)\ln(2\sigma^2)$. 
Together with the inequalities $1-u\leq e^{-u}\leq 1/u, \forall u>0$, \eqref{Proof1_11'} implies
\begin{equation} \label{Proof1_13}
	G(b_n-x)^n \leq c^{-1}e^{-B_1}e^{-\frac{xb_n}{2\sigma^2}+\frac{\alpha+1}{2}\ln \ln n}.
\end{equation}
Now, if $x>b_n-r/\sigma^2$, one can simply bound
\begin{align} 
	G(b_n-x)^n & \leq G(r/\sigma^2)^n \nonumber \\
	\label{Proof1_14} & \leq e^{-c_2n},
\end{align}
using Lemma \ref{Lemmacdf}, with $\displaystyle{c_2=-\ln\left(1-\frac{c\sigma^{2\alpha+2}e^{-\frac{r^2}{2\sigma^6}}}{r^{\alpha+1}}\right)}$.
Finally, combining \eqref{Proof1_13} and \eqref{Proof1_14} yields 
\begin{equation} \label{Proof1_15}
	G(b_n-x)^n \leq c^{-1}e^{-B_1}e^{-\frac{xb_n}{2\sigma^2}+\frac{\alpha+1}{2}\ln \ln n}+e^{-c_2n},
\end{equation}
for all positive numbers $x$. Now, plugging \eqref{Proof1_12} and \eqref{Proof1_15} into \eqref{Proof1_6} yields
\begin{equation} \label{Proof1_16}
	\PP\left[|M_n-\theta_F-b_n|>x\right] \leq c_1e^{-\frac{xb_n}{2\sigma^2}+\frac{\alpha+1}{2}\ln \ln n}+e^{-c_2n},
\end{equation}
where $c_1=2(\ln 2)C+c^{-1}e^{-B_1}.$
Taking $x$ of the form $\DS \frac{t+c_0\ln\ln n}{b_n}$ for $t\geq 0$ and $c_0=(\alpha+1)\sigma^2$ yields Theorem \ref{Theorem1}. 

\end{proof}

When $\alpha$ and $\beta$ are equal and known, it is possible to account for the deterministic bias at a higher order and get a more accurate estimate of $\theta_F$.

\begin{theorem} \label{Theorem1BIS}

Let assumptions of Theorem \ref{Theorem1} hold with $\alpha=\beta$. Set $\DS \tilde b_n=\sqrt{2\sigma^2 \ln n}\left(1-\frac{(\alpha+1)\ln\ln n}{4\ln n}\right)$.
Then, there exist $n_0\geq 1$ and $c_1,c_2>0$ that depend on $\alpha$, $L$ and $r$ only, such that for all $n\geq  n_0$ and $t>0$,
\begin{equation*}
	\PP\left[|M_n-\tilde b_n-\theta_F|>\frac{t}{\tilde b_n}\right]\leq c_1e^{-\frac{t}{2\sigma^2}}+e^{-c_2 n}.
\end{equation*}
\end{theorem}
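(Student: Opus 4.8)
The plan is to rerun the proof of Theorem~\ref{Theorem1} essentially verbatim, with the centering $b_n$ replaced by the sharper choice $\tilde b_n$, and to observe that the extra $\tfrac{\alpha+1}{2}\ln\ln n$ term appearing in \eqref{Proof1_13} and \eqref{Proof1_16} is now \emph{exactly cancelled}. Recall that the $\ln\ln n$ correction entered the proof of Theorem~\ref{Theorem1} solely through the polynomial factor $b_n^{\alpha+1}$ (resp. $b_n^{\beta+1}$) in Lemma~\ref{Lemmacdf}, via $\ln b_n^{\alpha+1}=\tfrac{\alpha+1}{2}\ln(2\sigma^2\ln n)\sim\tfrac{\alpha+1}{2}\ln\ln n$, combined with $e^{-b_n^2/(2\sigma^2)}=1/n$. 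When $\alpha=\beta$, the upper and lower bounds of Lemma~\ref{Lemmacdf} applied to the cdf $G$ of $Y_1-\theta_F$ carry the \emph{same} exponent $\alpha+1$, so a single deterministic shift of the centering can absorb this factor on both sides simultaneously.

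The crux is the following elementary expansion. Write $\delta_n=\frac{(\alpha+1)\ln\ln n}{4\ln n}$, so that $\tilde b_n=b_n(1-\delta_n)$ with $b_n=\sqrt{2\sigma^2\ln n}$. Then $\tilde b_n^2/(2\sigma^2)=\ln n\,(1-\delta_n)^2=\ln n-\tfrac{\alpha+1}{2}\ln\ln n+\delta_n^2\ln n$, where $\delta_n^2\ln n=\frac{(\alpha+1)^2(\ln\ln n)^2}{16\ln n}\to 0$; and $\tilde b_n^{\alpha+1}=(2\sigma^2\ln n)^{(\alpha+1)/2}(1-\delta_n)^{\alpha+1}$ with $(1-\delta_n)^{\alpha+1}\to 1$. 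Hence
\[
\frac{e^{-\tilde b_n^2/(2\sigma^2)}}{\tilde b_n^{\alpha+1}}=\frac{e^{-\delta_n^2\ln n}}{n\,(2\sigma^2)^{(\alpha+1)/2}(1-\delta_n)^{\alpha+1}}\asymp\frac{1}{n},
\]
the $(\ln n)^{(\alpha+1)/2}$ arising from $e^{-\tilde b_n^2/(2\sigma^2)}$ having cancelled against the one inside $\tilde b_n^{\alpha+1}$. Concretely, there are constants $0<\underline{c}\le\overline{c}$ (depending only on $\alpha$ and $\sigma^2$) and an integer $n_0$ such that $\underline{c}/n\le e^{-\tilde b_n^2/(2\sigma^2)}\,\tilde b_n^{-(\alpha+1)}\le\overline{c}/n$ for all $n\ge n_0$.

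With this in hand I would reproduce the two one-sided estimates. For the lower tail: for $n\ge n_0$ so that $\tilde b_n$ exceeds the threshold of Lemma~\ref{Lemmacdf} and $\overline{c}C/n\le 1/2$, the bound $(\tilde b_n+x)^2\ge\tilde b_n^2+2x\tilde b_n$ and Lemma~\ref{Lemmacdf} give $1-G(\tilde b_n+x)\le C\,e^{-\tilde b_n^2/(2\sigma^2)}\tilde b_n^{-(\alpha+1)}e^{-x\tilde b_n/\sigma^2}\le C\overline{c}\,n^{-1}e^{-x\tilde b_n/\sigma^2}$, and then, exactly as in \eqref{Proof1_12} via $1-u\ge e^{-2(\ln 2)u}\ge 1-2(\ln 2)u$ on $[0,1/2]$, $G(\tilde b_n+x)^n\ge 1-2(\ln 2)C\overline{c}\,e^{-x\tilde b_n/\sigma^2}$. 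For the upper tail: when $x\le \tilde b_n-\sigma^2/r$, the bound $(\tilde b_n-x)^2\le\tilde b_n^2-x\tilde b_n$ together with the lower bound in Lemma~\ref{Lemmacdf} gives $1-G(\tilde b_n-x)\ge c\underline{c}\,n^{-1}e^{x\tilde b_n/(2\sigma^2)}$, whence by $1-u\le e^{-u}\le 1/u$ one gets $G(\tilde b_n-x)^n\le (c\underline{c})^{-1}e^{-x\tilde b_n/(2\sigma^2)}$; and for $x>\tilde b_n-\sigma^2/r$ one bounds $G(\tilde b_n-x)^n\le G(\sigma^2/r)^n\le e^{-c_2 n}$ as in \eqref{Proof1_14}. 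Plugging both into $\PP[|M_n-\theta_F-\tilde b_n|>x]=1-G(\tilde b_n+x)^n+G(\tilde b_n-x)^n$ and substituting $x=t/\tilde b_n$ (so $x\tilde b_n/(2\sigma^2)=t/(2\sigma^2)$ and $x\tilde b_n/\sigma^2\ge x\tilde b_n/(2\sigma^2)$) yields the claimed inequality with $c_1=2(\ln 2)C\overline{c}+(c\underline{c})^{-1}$; crucially, no $\ln\ln n$ survives.

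I expect the only real work to be quantitative bookkeeping: fixing $n_0$ and checking that the negligible factors $e^{-\delta_n^2\ln n}$ and $(1-\delta_n)^{\pm(\alpha+1)}$, as well as the requirements that $\tilde b_n$ clear the threshold in Lemma~\ref{Lemmacdf} and that $C\overline c/n\le 1/2$, are all under control for $n\ge n_0$, together with tracking the (explicit) dependence of the constants on $\alpha,L,r$ and the fixed $\sigma^2$. The conceptual obstacle is really the choice of the coefficient $\tfrac{\alpha+1}{4}$ in $\tilde b_n$: it is dictated by matching $2\delta_n\ln n=\tfrac{\alpha+1}{2}\ln\ln n$ against $\ln\tilde b_n^{\alpha+1}\sim\tfrac{\alpha+1}{2}\ln\ln n$, which is precisely what makes $e^{-\tilde b_n^2/(2\sigma^2)}\tilde b_n^{-(\alpha+1)}\asymp 1/n$. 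This also explains why no single centering can remove the $\ln\ln n$ term when $\alpha\neq\beta$: the upper and lower bounds in Lemma~\ref{Lemmacdf} then carry the different exponents $\beta+1$ and $\alpha+1$, so the $\ln\ln n$ corrections in the two one-sided estimates cannot be cancelled at once, which is why Theorem~\ref{Theorem1} retains the $c_0\ln\ln n/b_n$ term in general.
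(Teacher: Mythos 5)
Your proposal is correct and is exactly the paper's argument: the paper's proof of Theorem~\ref{Theorem1BIS} consists precisely of rerunning the proof of Theorem~\ref{Theorem1} with $b_n$ replaced by $\tilde b_n$ and noting that $\ln n - B \leq \frac{\tilde b_n^2}{2\sigma^2}+(\alpha+1)\ln\tilde b_n\leq \ln n + B$, which is the same cancellation you establish via $e^{-\tilde b_n^2/(2\sigma^2)}\tilde b_n^{-(\alpha+1)}\asymp 1/n$. Your expansion of $\tilde b_n^2/(2\sigma^2)$ and the bookkeeping of the negligible factors are accurate, so nothing is missing.
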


\begin{proof}[Proof of Theorem \ref{Theorem1BIS}]

The proof of Theorem \ref{Theorem1BIS} follows the same lines as that of Theorem \ref{Theorem1}, where $b_n$ is replaced with $\tilde b_n$. The main modification occurs in \eqref{Casa=b1} and \eqref{casa=b2}, where we note that $\DS\ln n-B \leq \frac{\tilde b_n^2}{2\sigma^2}+(\alpha+1)\ln \tilde b_n\leq \ln n+B$, for some positive constant $B$. 

\end{proof}

In Theorem \ref{Theorem1}, $\theta_F$ is the endpoint of the distribution of the $X_j$'s. When $\theta_F$ is unknown, it can be estimated using $\hat\theta_n:=M_n-b_n$ (or $\tilde\theta_n:=M_n-\tilde b_n$ if $\alpha=\beta$ is known). Theorems \ref{Theorem1} and \ref{Theorem1BIS} show that $\hat\theta_n$ and $\tilde \theta_n$ are consistent estimators of $\theta_F$, but that they concentrate very slowly around $\theta_F$, at a polylogarithmic rate. We actually show that this rate is optimal (up to a sublogarithmic factor in the case of $\hat \theta_n$) in a minimax sense.

For every collection of parameters $\alpha\geq\beta\geq 0$, $r>0$ and $L>0$, let $\mathcal F(\alpha,\beta,r,L)$ the class of all cumulative distribution functions $F$ satisfying $\DS L^{-1}t^\alpha \leq 1-F(\theta_F-t)\leq Lt^{\beta}, \forall t\in [0,r]$.

The following result is a direct consequence of Theorem \ref{Theorem1}.

\begin{corollary} \label{TheoremMinimax}
	For all $\alpha\geq\beta\geq 0$, $r>0$ and $L>0$,
	$$\inf_{\hat T_n}\sup_{F\in \mathcal F(\alpha,\beta,r,L)}\E\left[|\hat T_n-\theta_F|\right] \lesssim \begin{cases}  \frac{\ln\ln n}{\sqrt{\ln n}} \mbox{ if } \alpha>\beta, \\ \frac{1}{\sqrt{\ln n}} \mbox{ if } \alpha=\beta, \end{cases}$$
	where the infimum is taken over all estimators $\hat T_n$. All the constants depend only on the parameters $\alpha,\beta,r,L$ and $\sigma^2$.
\end{corollary}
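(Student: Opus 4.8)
The plan is to prove the upper bound by exhibiting a single estimator. The natural one is $\hat T_n=\hat\theta_n:=M_n-b_n$ when $\alpha>\beta$ and $\hat T_n=\tilde\theta_n:=M_n-\tilde b_n$ when $\alpha=\beta$, with $M_n=\max\{Y_1,\dots,Y_n\}$. Every $F\in\mathcal F(\alpha,\beta,r,L)$ satisfies the hypothesis of Theorem \ref{Theorem1} (resp.\ Theorem \ref{Theorem1BIS}) with the \emph{same} $\alpha,\beta,r,L$, and the constants $n_0,c_0,c_1,c_2$ produced there depend only on $\alpha,\beta,r,L,\sigma^2$; hence it suffices to integrate the deviation bound against the tail, and the resulting risk bound is automatically uniform over the class. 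Thus the proof is, up to one point of care, a mechanical computation.

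For $\alpha>\beta$ and $n\ge n_0$, write $\E\,|\hat\theta_n-\theta_F|=\int_0^\infty\PP[\,|\hat\theta_n-\theta_F|>s\,]\,ds$ and split at $s_\star:=c_0\ln\ln n/b_n$. On $[0,s_\star]$ bound the integrand by $1$, contributing $s_\star\asymp\ln\ln n/\sqrt{\ln n}$. On $(s_\star,\infty)$, substitute $s=(t+c_0\ln\ln n)/b_n$ with $t\ge 0$ and apply Theorem \ref{Theorem1}:
\[
\int_{s_\star}^\infty\PP[\,|\hat\theta_n-\theta_F|>s\,]\,ds=\frac{1}{b_n}\int_0^\infty\PP\!\left[\,|\hat\theta_n-\theta_F|>\frac{t+c_0\ln\ln n}{b_n}\,\right]dt\le \frac{1}{b_n}\int_0^\infty\bigl(c_1e^{-t/(2\sigma^2)}+e^{-c_2n}\bigr)\,dt .
\]
The $c_1e^{-t/(2\sigma^2)}$ contribution equals $2\sigma^2c_1/b_n\asymp 1/\sqrt{\ln n}$, which is absorbed by $s_\star$, yielding $\E\,|\hat\theta_n-\theta_F|\lesssim\ln\ln n/\sqrt{\ln n}$. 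The case $\alpha=\beta$ is entirely analogous: using Theorem \ref{Theorem1BIS} and the substitution $s=t/\tilde b_n$ gives $\E\,|\tilde\theta_n-\theta_F|\le\tilde b_n^{-1}\int_0^\infty(c_1e^{-t/(2\sigma^2)}+e^{-c_2n})\,dt$, and since $\tilde b_n\asymp\sqrt{\ln n}$ and no $\ln\ln n$ term appears in that deviation bound, the bound is $\lesssim 1/\sqrt{\ln n}$.

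The point I expect to be the main obstacle is the term $e^{-c_2n}$ in the display: being constant in $t$, it is not integrable over $(0,\infty)$, so the raw estimators $\hat\theta_n,\tilde\theta_n$ can have infinite risk when $F$ has an extremely heavy left tail (the envelope in $\mathcal F(\alpha,\beta,r,L)$ constrains $F$ only on $[\theta_F-r,\theta_F]$). The upper side of the error causes no difficulty: since $1-F(\theta_F)=0$ forces $X\le\theta_F$ a.s., one has $M_n-\theta_F\le\max_{j\le n}\varepsilon_j$, so $(\hat\theta_n-\theta_F)^+\le(\max_{j\le n}\varepsilon_j-b_n)^+$, and a Gaussian union bound together with $ne^{-b_n^2/(2\sigma^2)}=1$ gives $\E[(\max_{j\le n}\varepsilon_j-b_n)^+]\lesssim 1/b_n$. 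For the lower side one must truncate, replacing $\hat\theta_n$ by its maximum with a crude lower bound $\hat L_n$ for $\theta_F$; a convenient choice exploits that, by $1-F(\theta_F-r)\ge L^{-1}r^\alpha$, a fixed positive fraction of the $Y_j$ exceed $\theta_F-r$ with probability $1-e^{-cn}$, so that on an event of that probability the estimator is unchanged and the bound above applies, while off it the error is controlled via \eqref{Proof1_13}. Making this precise — and verifying the truncated estimator has finite risk uniformly over the class, so that the extra contribution is $o(\ln\ln n/\sqrt{\ln n})$ — is the delicate part; everything else is an immediate integration against the tail bounds of Theorems \ref{Theorem1} and \ref{Theorem1BIS}.
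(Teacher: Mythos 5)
Your main computation --- integrating the deviation bounds of Theorems \ref{Theorem1} and \ref{Theorem1BIS}, splitting at $s_\star=c_0\ln\ln n/b_n$, and handling the upper deviations via $M_n-\theta_F\le\max_j\varepsilon_j$ --- is exactly the ``direct consequence'' the paper has in mind (it gives no further proof of this corollary), and that part is correct. You have also put your finger on a real lacuna that the paper glosses over: the $e^{-c_2n}$ term comes from the regime $x>b_n-r/\sigma^2$ in \eqref{Proof1_14}, where the lower-deviation probability $\PP[M_n<\theta_F+b_n-x]=G(b_n-x)^n$ is bounded uniformly but does not decay in $x$; since membership in $\mathcal F(\alpha,\beta,r,L)$ constrains $F$ only on $[\theta_F-r,\theta_F]$, one can take $F=\tfrac12 U[\theta_F-r,\theta_F]+\tfrac12 H$ with $H((-\infty,\theta_F-s])\asymp 1/\ln s$, for which $\int_0^\infty G(b_n-x)^n\diff x=\infty$ for every fixed $n$, so the raw estimator $M_n-b_n$ genuinely has infinite worst-case risk.

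The gap is that your proposed repair does not close this. Any truncation statistic $\hat L_n$ built from the sample (an order statistic, an empirical quantile, the median) satisfies $\hat L_n\le M_n$, so on the event $E_s=\{\text{all }Y_j\le\theta_F-s\}$ the truncated estimator $\max(M_n-b_n,\hat L_n)$ still lies below $\theta_F-s$; since $\PP[E_s]=G(-s)^n$ is not integrable in $s$ for the $F$ above, $\E[(\theta_F-\max(M_n-b_n,\hat L_n))_+]$ remains infinite. In particular the clause ``off it the error is controlled via \eqref{Proof1_13}'' cannot be made to work: \eqref{Proof1_13} only bounds a probability in the moderate-deviation range $x\le b_n-r/\sigma^2$, whereas what is needed is integrability of an unbounded error over a rare event, and bounding $\PP[E_s]\le e^{-c_2n}$ uniformly is of no help there. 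The clean ways out are either to impose a left-tail (or compact-support) restriction on the class --- which is what effectively happens in the multivariate application, where Assumption \ref{A0} forces $G\subseteq B(0,R)$ and hence $|\langle u,X\rangle|\le R$, so the problematic integral is over a range of length $O(R)$ and contributes only $O(Re^{-c_2n})$ --- or to state the one-dimensional corollary for quantiles of the loss rather than its expectation. As written, neither your argument nor the paper's one-line justification establishes the expectation bound over all of $\mathcal F(\alpha,\beta,r,L)$.
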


Theorem 2 in \cite{Goldenshluger2004} suggests that the upper bound in Corollary \ref{TheoremMinimax} is optimal, up to a sublogarithmic factor. However, their result is only for a modified version of the model and hence does not show a lower bound that matches their upper bound.

As a conclusion, these results suggest that in the presence of Gaussian errors, the endpoint $\theta_F$ of the distribution of the contaminated data can only be estimated at a polylogarithmic rate, in a minimax sense. In the next section, we prove a lower bound in a multivariate setup, whose rate is polylogarithmic in the sample size.

\section{Application to convex support estimation from noisy data} \label{SectiondD}

In this section, we apply Theorem \ref{Theorem1} to the problem of estimating a convex body from noisy observations of independent uniform random points. Let $G$ be a convex body in $\R^d$ and let $X$ be uniformly distributed in $G$. Let $\varepsilon$ be a $d$-dimensional centered Gaussian random variable with covariance matrix $\sigma^2I$, where $\sigma^2$ is a known positive number and $I$ is the $d\times d$ identity matrix. Let $Y=X+\varepsilon$ and assume that a sample $Y_1,\ldots,Y_n$ of $n$ independent copies of $Y$ is available to estimate $G$. 

Our estimation scheme consists in reducing the $d$-dimensional estimation problem to a 1-dimensional one, based on the following observation. Let $u\in\Sd$. Then, $\langle u,Y\rangle = \langle u,X\rangle+\langle u,\varepsilon\rangle$ and:
\begin{itemize}
	\item $\langle u,\varepsilon\rangle$ is a centered Gaussian random variable with variance $\sigma^2$,
	\item $h_G(u)$ is the endpoint of the distribution of $\langle u,X\rangle$.
\end{itemize}

In the sequel, we denote by $F_u$ the cumulative distribution function of $\langle u,X\rangle$. 

Consider the following assumption, which entails the next lemma.

\begin{assumption} \label{A0} 
	$B(a,r)\subseteq G\subseteq B(0,R)$, for some $a\in\R^d$. 
\end{assumption}

\begin{lemma} \label{lemma0}
	Let $G$ satisfy Assumption \ref{A0}. Then, for all $u\in\Sd$, $\theta_{F_u}=h_G(u)$ and $F_u\in\mathcal F(d,1,r,L)$, where $\DS L=(2R)^{d-1}r^d\kappa_d\max\left(1,\frac{d}{r^{d-1}\kappa_{d-1}}\right)$.
\end{lemma}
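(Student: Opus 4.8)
The plan is to establish the two claims separately: first the identity $\theta_{F_u} = h_G(u)$, then the two-sided polynomial bound on $1 - F_u$ near the endpoint. The first claim is essentially a restatement of the definition of the support function combined with the fact that $\langle u, X\rangle$ ranges over the projection of $G$ onto the line spanned by $u$. Since $G$ is compact, this projection is the closed interval $[-h_G(-u), h_G(u)]$, and since $X$ is uniform on $G$ (which has nonempty interior), the distribution of $\langle u, X\rangle$ places positive mass in every neighborhood of $h_G(u)$ and no mass above it; hence $h_G(u)$ is exactly the right endpoint $\theta_{F_u}$ of the support of $F_u$. This part requires only a line or two.

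The substantive part is the membership $F_u \in \mathcal F(d,1,r,L)$, i.e. the estimate $L^{-1} t^d \le 1 - F_u(h_G(u) - t) \le L t$ for all $t \in [0,r]$. By definition of uniform measure, $1 - F_u(h_G(u) - t) = \PP[\langle u, X\rangle > h_G(u) - t] = |G \cap H_{u,t}| / |G|$, where $H_{u,t} = \{x : \langle u, x\rangle > h_G(u) - t\}$ is the open slab of width $t$ cut off at the supporting hyperplane of $G$ orthogonal to $u$. So I must sandwich the volume of this "cap" $G \cap H_{u,t}$ between a constant times $t^d$ and a constant times $t$, and bound $|G|$ from above and below — the latter is immediate from Assumption \ref{A0}: $\kappa_d r^d \le |G| \le \kappa_d R^d$, or more simply $|G| \ge \kappa_d r^d$ for the upper bound on $1-F_u$ and $|G| \le \kappa_d(2R)^d$ (using $G \subseteq B(0,R)$) — actually I only need $|G| \le \kappa_d R^d$, but the explicit constant $L$ in the statement suggests the authors bound things somewhat crudely via the diameter $2R$.

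For the \emph{lower bound} on the cap volume (giving the $t^d$ term, hence $\alpha = d$): since $B(a,r) \subseteq G$, consider the point $p$ on $\partial B(a,r)$ in direction $u$ from $a$, i.e. $p = a + ru$; this lies in $G$ and satisfies $\langle u, p\rangle = \langle u,a\rangle + r$. The cap $G \cap H_{u,t}$ is nonempty for $t$ near $0$ only once $h_G(u) - t < h_G(u)$, but to get a quantitative lower bound I take a shrunken ball: for $t \le r$, the ball $B\big(a + (h_G(u) - \langle u,a\rangle - t/2)\,u \cdot \text{(something)},\, t/2\big)$ — more cleanly, scale $B(a,r)$ toward a boundary point of $G$ in direction $u$ by factor $t/(2r)$ (or similar), obtaining a ball of radius $\propto t$ contained in $G \cap H_{u,t}$, whose volume is $\kappa_d (ct)^d$. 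This is where the constant $d/(r^{d-1}\kappa_{d-1})$ versus $1$ max in $L$ will come from — one must track whether the ball-scaling or the hyperplane-slab bound dominates. For the \emph{upper bound} on the cap volume (giving the linear $t$ term, hence $\beta = 1$): since $G \subseteq B(0,R)$, the cap $G \cap H_{u,t}$ is contained in the intersection of $B(0,R)$ with a slab of width $t$; this intersection has volume at most $t$ times the $(d-1)$-volume of a maximal cross-sectional disk, which is at most $\kappa_{d-1}(2R)^{d-1} t$ (bounding the cross-section radius by the diameter $2R$ of $B(0,R)$, again explaining the $(2R)^{d-1}$ factor). Dividing by $|G| \ge \kappa_d r^d$ and collecting constants yields $1 - F_u(h_G(u)-t) \le L t$ with $L$ as stated. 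I expect the main obstacle to be nothing deep but rather the bookkeeping: pinning down the cap-volume lower bound with an explicit geometric construction (the scaled inner ball) so that the constant matches the $\max(1, d/(r^{d-1}\kappa_{d-1}))$ appearing in $L$, and making sure the range $t \in [0,r]$ is exactly what the construction needs (the inner ball of radius $\le r$ stays inside $B(a,r) \subseteq G$).
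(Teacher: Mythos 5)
Your proposal is correct and follows essentially the same route as the paper: both reduce $1-F_u(h_G(u)-t)$ to the cap-volume ratio $|G\cap H_{u,t}|/|G|$, upper-bound the cap by a width-$t$ slab of a set of diameter $2R$, and lower-bound it by a convex body inscribed, via convexity, between $B(a,r)$ and an extreme point $x^*$ with $\langle u,x^*\rangle=h_G(u)$. The only (minor) difference is the inscribed body: the paper uses the cone with apex $x^*$ over the equatorial $(d-1)$-disk of $B(a,r)$ orthogonal to $u$, whose $t$-cap has volume $(r/\ell)^{d-1}\kappa_{d-1}t^{d}/d$ with $\ell=\langle u,x^*-a\rangle\leq 2R$ (this is exactly where the $\kappa_{d-1}$ and $d$ in the stated $L$ come from), whereas you shrink and translate the whole ball toward $x^*$ — both yield the $t^{d}$ lower bound on $[0,r]$, yours with a slightly different constant.
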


Hence, projecting the data $Y_j$, $ 1\leq j \leq n \dots, n $ on any direction brings us back to the one dimensional setup studied in Section \ref{Section1D}, where the end point of the corresponding distribution is the value of the support function of $G$ in the projection direction.

We are now in a position to define an estimator of $G$. For $u\in\R^d$, let $\hat h(u)$ be the estimator of $h_G(u)$ defined as $\DS \hat h(u)=\max_{1\leq j \leq n}\langle u,Y_j\rangle -b_n$, where we recall that $b_n=\sqrt{2\sigma^2 \ln n}$.

Let $M$ be a positive integer and $U_1,\ldots,U_M$ be independent uniform random vectors on the sphere $\Sd$ and define 
\begin{equation}
	\hat G_M=\{x\in\R^d:\langle U_j,x\rangle\leq \hat h(U_j), \; \forall j=1,\ldots,M\}.
\end{equation}
We also define a truncated version of $\hat G_M$. Let $\hat \mu_n=\frac{1}{n}\sum_{j=1}^n Y_j$. Define 
\begin{equation} \label{eq:truncated_estimator}
	\tilde G_M=\begin{cases} \hat G_M\cap B(\hat \mu_n,\ln n) \mbox{ if } \hat G_M\neq\emptyset \\ \{\hat \mu_n\} \mbox{ otherwise}. \end{cases}
\end{equation}

First, we give a deviation inequality for the estimator $\hat G_M$. As a corollary, some choice of $M$ (independent of $G$) will make the risk of the truncated estimator $\tilde G_M$ have order $(\ln n)^{-1/2}$.

\begin{theorem} \label{Theorem2}

Let $n>3$, $b_n=\sqrt{2\sigma^2\ln n}$ and $M$ be a positive integer with $(\ln M)/b_n\leq \min(r/(4\sigma^2),1/2)$. Then, there exist positive constants $c_0, c_1, c_2$ and $c_3$ such that the following holds. For all convex bodies $G$ that satisfy Assumption \ref{A0}, for all positive $x$ with $x\leq \frac{rb_n}{4\sigma^2}-\ln M$, 
$$\dH(\hat G_M,G)\leq c_0\frac{x+\ln M}{b_n}$$
with probability at least $1-c_1e^{-x}-Me^{-c_2 n}-(6b_n)^de^{-c_3M(\ln M)^{d-1}b_n^{-(d-1)}}$.

\end{theorem}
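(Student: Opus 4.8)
The plan is to control $\dH(\hat G_M,G)=\sup_{u\in\Sd}|h_{\hat G_M}(u)-h_G(u)|$ by combining a uniform control of the one-dimensional estimators $\hat h(U_j)$ over the $M$ random directions with a geometric approximation argument that says the finite family $U_1,\dots,U_M$ is, with high probability, a fine enough net of $\Sd$. First I would record the two competing error sources. On the event where all $\hat h(U_j)$ are good, $\hat G_M$ is sandwiched between $G$ (shrunk slightly) and $G$ (inflated slightly): the inclusion $G\cap(\text{small perturbation})\subseteq \hat G_M$ is automatic because each defining half-space $\langle U_j,x\rangle\le \hat h(U_j)$ with $\hat h(U_j)\ge h_G(U_j)-\delta$ contains $G$ minus a slab of width $O(\delta)$, while the reverse inclusion $\hat G_M\subseteq G+(\text{something})$ requires that the directions $U_j$ are dense enough on the sphere so that cutting only finitely many half-spaces still produces a body close to $G$ in Hausdorff distance.

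Step 1 (one-dimensional deviations, uniform over the $M$ directions). By Lemma \ref{lemma0}, for each fixed $u\in\Sd$ we have $F_u\in\mathcal F(d,1,r,L)$ with $\theta_{F_u}=h_G(u)$, so Theorem \ref{Theorem1} applies with $\alpha=d$, $\beta=1$ and constants depending only on $d,r,R,\sigma^2$. Applying it to each $U_j$ (conditionally on the $U_j$'s, the randomness in $\hat h(U_j)$ comes only from the $Y$'s, which are independent of the $U_j$'s) and taking a union bound over $j=1,\dots,M$, I get that with probability at least $1-c_1Me^{-x'/(2\sigma^2)}-Me^{-c_2n}$, simultaneously for all $j$,
$$
\bigl|\hat h(U_j)-h_G(U_j)\bigr|\le \frac{x'+c_0\ln\ln n}{b_n}.
$$
Rescaling $x'$ to absorb the $\ln M$ from the union bound (set $x'=2\sigma^2(x+\ln(c_1M))$, say) turns this into a bound of the form $\delta:=c_0'(x+\ln M)/b_n$ valid with probability $1-c_1e^{-x}-Me^{-c_2n}$, which matches the first two terms in the claimed probability. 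The constraint $x\le \frac{rb_n}{4\sigma^2}-\ln M$ together with $(\ln M)/b_n\le \min(r/(4\sigma^2),1/2)$ is exactly what is needed so that $\delta$ stays below $r$ and within the range where Theorem \ref{Theorem1} and the net argument below are valid.

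Step 2 (the net event) and Step 3 (assembling the Hausdorff bound). Let $\eta=\delta$ be the target accuracy. I claim that if $U_1,\dots,U_M$ form an $\epsilon$-net of $\Sd$ with $\epsilon\asymp \sqrt{\eta/R}\asymp \sqrt{(x+\ln M)/(Rb_n)}$, then $\dH(\hat G_M,G)\lesssim \eta$ on the event of Step 1; this is the standard fact (used in \cite{BrunelHausdorff2017}) that approximating a convex body contained in $B(0,R)$ and containing $B(a,r)$ by intersecting half-spaces in $\epsilon$-net directions costs $O(R\epsilon^2)$ in Hausdorff distance (a point of $\hat G_M$ outside $G+c\eta B_d(0,1)$ would be separated from $G$ by a hyperplane whose normal is within $\epsilon$ of some $U_j$, and a short computation with the inradius $r$ shows this forces $\langle U_j,x\rangle>\hat h(U_j)$, a contradiction). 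The probability that $M$ i.i.d. uniform points fail to be an $\epsilon$-net of $\Sd$ is at most $N(\epsilon)\,(1-c\,\epsilon^{d-1})^M\le N(\epsilon)e^{-cM\epsilon^{d-1}}$ where $N(\epsilon)\lesssim \epsilon^{-(d-1)}$; plugging in $\epsilon^{d-1}\asymp ((x+\ln M)/(Rb_n))^{(d-1)/2}$ and simplifying (crudely bounding $x+\ln M$ from below by a constant and $\epsilon^{-(d-1)}$ by $(6b_n)^d$) produces the third term $(6b_n)^d e^{-c_3 M(\ln M)^{d-1}b_n^{-(d-1)}}$. Intersecting the event of Step 1 with the net event and renaming constants gives the theorem.

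The main obstacle I expect is Step 2: getting the net argument to produce exactly the stated third probability term, with the $(\ln M)^{d-1}b_n^{-(d-1)}$ exponent, requires choosing the net radius $\epsilon$ correctly as a function of the \emph{achieved} one-dimensional accuracy $\delta$ (not of $x$ alone), and carefully tracking how the inradius $r$ and circumradius $R$ enter the quadratic $R\epsilon^2\lesssim \delta$ conversion; the bookkeeping that reconciles $x+\ln M$ in the rate with $(\ln M)^{d-1}$ in the exponent — essentially that one only needs $\epsilon^{d-1}M\gtrsim$ (something like $(\ln M/b_n)^{(d-1)/2}M$) — is where the argument is most delicate. The deviation and geometric pieces individually are routine given Theorem \ref{Theorem1} and Lemma \ref{lemma0}.
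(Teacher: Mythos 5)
Your Step 1 is exactly the paper's argument (Lemma \ref{lemma0} plus Theorem \ref{Theorem1}, then a union bound over the $M$ directions), and your overall architecture --- uniform one-dimensional deviations intersected with the event that $U_1,\dots,U_M$ form an $\epsilon$-net --- is also the paper's. The gap is in Step 2: the claim that intersecting half-spaces in $\epsilon$-net directions approximates $G$ to within $O(R\epsilon^2)$ in Hausdorff distance is false for general $G\in\mathcal K_{r,R}$. The quadratic rate is a feature of bodies that roll freely inside a ball (it is exact for a Euclidean ball), but Assumption \ref{A0} admits bodies with flat boundary pieces, and there the circumscribed polytope in net directions is only $\Theta(R\epsilon)$ away. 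Concretely, in $d=2$ take $G=\{x_2\le 0\}\cap B(0,R)$ near the boundary point $y=(0,0)$ with outward normal $v=(0,1)$: the point $x=(0,s)$ satisfies $\langle u,x\rangle=s\cos\theta\le h_G(u)=R\sin\theta$ for every net direction $u$ at angle $\theta\ge\epsilon$ from $v$, so points at distance $s\asymp R\epsilon$ survive all the cuts. This is why the paper's Lemma \ref{TDLS} gives $\dH(\hat G,G)\le \frac{3tR}{2r}+4R\delta$, \emph{linear} in the net radius $\delta$, and why the paper must take $\delta=(\ln M)/b_n$ (comparable to the statistical accuracy) rather than its square root; that choice is what produces the exponent $M(\ln M)^{d-1}b_n^{-(d-1)}$ in the third probability term.

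With your choice $\epsilon\asymp\sqrt{(x+\ln M)/(Rb_n)}$ the net event has the right (indeed a smaller) failure probability, but the geometric error it buys is only $O(R\epsilon)\asymp\sqrt{R(x+\ln M)/b_n}$, which is much larger than the claimed bound $c_0(x+\ln M)/b_n$; so the conclusion of the theorem does not follow from your argument as written. The fix is simply to replace the quadratic approximation claim by the linear one (the paper's Lemma \ref{TDLS}, which also already accounts for the perturbation $t=\max_j|\hat h(U_j)-h_G(U_j)|$ of the support-function values and for the inradius/circumradius ratio $R/r$), and to take $\epsilon\asymp(\ln M)/b_n$; the "delicate bookkeeping" you flag then disappears, since the exponent $M\epsilon^{d-1}=M(\ln M)^{d-1}b_n^{-(d-1)}$ and the prefactor $(1/\epsilon)^d\le(6b_n)^d$ come out directly.
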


\begin{proof}[Proof of Theorem \ref{Theorem2}]

The proof relies on Lemma 7 in \cite{Brunel2016-2}, which we state here in a simpler form.

\begin{lemma} \label{TDLS}
Let $\delta\in (0,1/2]$ and $\mathcal N$ be a $\delta$-net of $\Sd$. Let $G$ be a convex body in $\R^d$ and $h_G$ its support function. Let $a\in\R^d$ and $0<r\leq R$ such that $B(a,r)\subseteq G\subseteq B(a,R)$. Let $\hat h:\Sd\to\R$ and $\hat G=\{x\in\R^d:\langle u,x\rangle \leq \hat h(u), \; \forall u\in\mathcal N\}$. Let $t=\max_{u\in\mathcal N}|\hat h(u)-h_G(u)|$. If $t\leq r/2$, then $\DS d_{\textsf H}(\hat G,G)\leq \frac{3t R}{2r}+4R\delta$.
\end{lemma}

Let $G$ satisfy Assumption \ref{A0}. Combining Lemma \ref{lemma0} and Theorem \ref{Theorem1}, we have that for all $u\in\mathbb S^{d-1}$, and all $t\geq 0$,
\begin{align} \label{Proof2_1}
	\PP_G\left[|\hat h(u)-h_G(u)|>t\right]\leq c_1e^{-\frac{b_n t}{2\sigma^2}}+e^{-c_2 n},
\end{align}
with $c_1$ and $c_2$ as in Theorem \ref{Theorem1} with $\alpha=(d+1)/2$. Hence, by a union bound, 
\begin{align} \label{Proof2_2}
	\PP_G\left[\max_{1\leq j \leq M}|\hat h(U_j)-h_G(U_j)|>t\right]\leq c_1Me^{-\frac{b_n t}{2\sigma^2}}+Me^{-c_2 n}.
\end{align}

Let $t<r/2$. Consider the event $\mathcal A$ where $U_1,\ldots,U_M$ form a $\delta$-net of $\Sd$, where $\delta\in (0,1/2)$. By Lemma \ref{TDLS}, if $\mathcal A$ holds and if $|\hat h(U_j)-h_G(U_j)|\leq t$ for all $j=1,\ldots,M$, then $\dH(\hat G,G)\leq \frac{3t R}{r}+4R\delta$. Hence, by \eqref{Proof2_2} and Lemma 10 in \cite{Brunel2016-2},
\begin{align}
	& \PP\left[\dH(\hat G,G)>\frac{3t R}{r}+4R\delta\right] \nonumber \\
	\label{EndProofThm} & \quad\quad \leq c_1Me^{-\frac{b_n t}{2\sigma^2}}+Me^{-c_2 n}+6^d\exp\left(-c_3 M\delta^{d-1}+d\ln\left(\frac{1}{\delta}\right)\right),
\end{align}
where $c_3=(2d8^{(d-1)/2})^{-1}$. Taking $\delta=(\ln M)/b_n$ ends the proof of Theorem \ref{Theorem2}.

\end{proof}

Theorem \ref{Theorem2} yields a uniform upper bound on the risk of $\tilde G_M$, which we derive for a special choice of $M$. Denote by $\mathcal K_{r,R}$ the collection of all convex bodies satisfying Assumption \ref{A0}.

\begin{corollary} \label{MainCor}

Let $A=2d(d+1)8^{(d-1)/2}$ and $M=\lfloor Ab_n^{d-1}(\ln b_n)^{-(d-2)}\rfloor$. Then, the truncated estimator $\tilde G_M$ satisfies
$$\sup_{G\in\mathcal K_{r,R}}\E_G[\dH(\tilde G_M,G)]=O\left(\frac{\ln\ln n}{\sqrt{\ln n}}\right).$$

\end{corollary}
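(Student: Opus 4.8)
The plan is to feed the prescribed $M$ into Theorem~\ref{Theorem2} --- which already yields $\dH(\hat G_M,G)=O((\ln M)/b_n)$ with high probability --- and to control the \emph{truncated} estimator separately, by a crude argument, on the event where $\hat G_M$ ceases to be a good approximation of $G$, exploiting the truncation built into $\tilde G_M$. With $M=\lfloor Ab_n^{d-1}(\ln b_n)^{-(d-2)}\rfloor$ one has $\ln M\asymp(d-1)\ln b_n\asymp\ln\ln n$, so the hypothesis $(\ln M)/b_n\le\min(r/(4\sigma^2),1/2)$ of Theorem~\ref{Theorem2} holds for $n$ large and $(\ln M)/b_n=O(\ln\ln n/\sqrt{\ln n})$.

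The first step is to introduce the event $\mathcal G=\mathcal G_1\cap\mathcal G_2\cap\mathcal G_3$, where $\mathcal G_1=\{\max_{i\le n}\|\varepsilon_i\|\le b_n+1\}$, $\mathcal G_2=\{\forall v\in\Sd\ \exists j\le M:\langle U_j,v\rangle\ge 1/2\}$, and $\mathcal G_3=\{\|\tfrac1n\sum_{i\le n}\varepsilon_i\|\le 1\}$, and to establish two facts. (i) On $\mathcal G_1$ one has $\hat h(U_j)=\max_i\langle U_j,Y_i\rangle-b_n\le R+\max_i\|\varepsilon_i\|-b_n\le R+1$ for all $j$, whence on $\mathcal G_1\cap\mathcal G_2$ the set $\hat G_M$ --- an intersection of half-spaces $\langle U_j,\cdot\rangle\le\hat h(U_j)$ with $\tfrac12$-spanning normals and right-hand sides at most $R+1$ --- is contained in $B(0,2(R+1))$; combining this with $\mathcal G_3$ (which gives $\|\hat\mu_n\|\le R+1$, hence $\hat\mu_n\in B(0,2(R+1))$) one gets $\tilde G_M\subseteq B(0,2(R+1))$ and therefore $\dH(\tilde G_M,G)\le 4(R+1)=:D_0$ on $\mathcal G$. (ii) $\PP_G(\mathcal G^c)$ tends to $0$ faster than any negative power of $\ln n$: $\PP_G(\mathcal G_1^c)\le n\,\PP(\|\varepsilon_1\|>b_n+1)\lesssim(\ln n)^{d/2}e^{-\sqrt{2\ln n}/\sigma}$ by the $\chi^2_d$ tail bound, $\PP(\mathcal G_3^c)\le e^{-cn}$ by Gaussian concentration of the empirical mean, and $\PP(\mathcal G_2^c)\lesssim\eta^{-(d-1)}(1-\rho_\eta)^M$ (cover $\Sd$ by an $\eta$-net and bound the probability that all $U_j$ avoid a fixed spherical cap), which is super-polynomially small because $M\to\infty$.

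Given (i)--(ii), I would split $\E_G[\dH(\tilde G_M,G)]$ over $\mathcal G$ and $\mathcal G^c$. On $\mathcal G^c$: since always $\dH(\tilde G_M,G)\le\|\hat\mu_n\|+\ln n+R$ (both sets lie in explicit balls), one has $\E_G[\dH(\tilde G_M,G)^2]=O((\ln n)^2)$, and Cauchy--Schwarz gives $\E_G[\dH(\tilde G_M,G)\,\mathbf 1_{\mathcal G^c}]\le\sqrt{\E_G[\dH^2]\,\PP_G(\mathcal G^c)}=o(\ln\ln n/\sqrt{\ln n})$. On $\mathcal G$: as $\dH\le D_0$ there, $\E_G[\dH(\tilde G_M,G)\,\mathbf 1_{\mathcal G}]\le\int_0^{D_0}\PP_G(\{\dH(\tilde G_M,G)>z\}\cap\mathcal G)\,dz$; for $z\le R$, using that on $\mathcal G$ one has $\|\hat\mu_n\|\le R+1$, the implication $\{\dH(\hat G_M,G)\le z\le R\}\Rightarrow\{\hat G_M\subseteq B(0,2R)\subseteq B(\hat\mu_n,\ln n)\}\Rightarrow\{\tilde G_M=\hat G_M\}$ (valid for $n$ large) bounds the integrand by $\PP_G(\dH(\hat G_M,G)>z)$. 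I would bound this probability by $1$ for $z<c_0(\ln M)/b_n$ --- contributing $O(\ln\ln n/\sqrt{\ln n})$ --- by Theorem~\ref{Theorem2} with $x=zb_n/c_0-\ln M$ for $c_0(\ln M)/b_n\le z\le z^\ast:=\min(R,c_0r/(4\sigma^2))$, and by $\PP_G(\dH(\hat G_M,G)>z^\ast)$ for $z^\ast<z\le D_0$. Integrating the three terms of Theorem~\ref{Theorem2}: $\int c_1e^{-x}\,dz=(c_0c_1/b_n)\int e^{-x}\,dx=O(1/\sqrt{\ln n})$; the term $Me^{-c_2n}$ and the $e^{-x(z^\ast)}$-term (where $x(z^\ast)\asymp b_n$) are super-polynomially negligible; and the net-failure term is, by the calibration $A=2d(d+1)8^{(d-1)/2}$ --- chosen precisely so that $c_3A=d+1$ --- bounded via $c_3M(\ln M)^{d-1}b_n^{-(d-1)}=(d+1)(d-1)^{d-1}\ln b_n+O(\ln\ln b_n)\ge(d+1)\ln b_n$ for $n$ large, giving $(6b_n)^de^{-c_3M(\ln M)^{d-1}b_n^{-(d-1)}}\le 6^db_n^{-1}=O(1/\sqrt{\ln n})$, so its contribution over $z\in[0,D_0]$ is $O(1/\sqrt{\ln n})$. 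Adding the pieces yields $\E_G[\dH(\tilde G_M,G)]=O(\ln\ln n/\sqrt{\ln n})$ with a constant depending only on $d,r,R,\sigma^2$, hence uniformly over $\mathcal K_{r,R}$.

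I expect the crux to be the ``intermediate'' failure regime. The probability that $U_1,\dots,U_M$ fails to be a fine enough net of $\Sd$ is only of order $1/b_n$, and on that event $\hat G_M$ need not be close to $G$; a naive bound would then cost about $\ln n\cdot(1/b_n)\asymp\sqrt{\ln n}$, which overwhelms the target rate. The resolution --- and the reason the truncation at $B(\hat\mu_n,\ln n)$ is part of the definition of $\tilde G_M$ --- is that even with a poor net, on the overwhelmingly likely event $\mathcal G$ the numbers $\hat h(U_j)$ are bounded and the $U_j$ span $\R^d$ generously, forcing $\tilde G_M$ into a \emph{fixed} ball; genuinely large values of $\dH(\tilde G_M,G)$ are confined to $\mathcal G^c$, whose probability decays faster than any negative power of $\ln n$ and so absorbs the $O(\ln n)$ worst case with room to spare. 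The only other delicate point is the verification, via the specific value of $A$, that the net-failure term in Theorem~\ref{Theorem2} is $O(1/b_n)$ for the chosen $M$.
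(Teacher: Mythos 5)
Your proposal is correct, and although its skeleton (plug the prescribed $M$ into Theorem~\ref{Theorem2}, integrate the resulting tail bound, and use the truncation to control a bad event) matches the paper's, the decomposition is genuinely different. The paper splits on the events $\{\hat G_M\neq\emptyset\}$ and $\{|\hat\mu_n-\mu|\le 5R\}$, bounds $\dH(\tilde G_M,G)$ crudely by $|\hat\mu_n-\mu|+\ln n+2R$, and integrates the tail probability over the whole interval $[0,\ln n+7R]$, bounding the integrand on $[r/2,\,\ln n+7R]$ by its value at $r/2$. You instead build the auxiliary event $\mathcal G$ (bounded noise maxima, $\tfrac{1}{2}$-spanning directions, bounded noise average) on which $\tilde G_M$ is trapped in the fixed ball $B(0,2(R+1))$, so the tail is integrated only over a constant-length interval, and you dispose of $\mathcal G^{\complement}$ by Cauchy--Schwarz against its super-polylogarithmically small probability. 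The ``crux'' you identify is real and your fix is not merely cosmetic: for this choice of $M$ the net-failure term in Theorem~\ref{Theorem2} is of order $1/b_n$ when $d=2$ (your calibration $c_3A=d+1$ gives $(6b_n)^2e^{-3\ln b_n+O(1)}\asymp 1/b_n$), so integrating it over an interval of length of order $\ln n$, as the paper's treatment of $E_1$ does, contributes $\ln n/b_n\asymp\sqrt{\ln n}$, which does not vanish; confining the integral to $[0,D_0]$ via $\mathcal G$ is exactly what is needed there (for $d\ge3$ the net term is much smaller and the longer interval is harmless). The price of your route is a handful of extra standard estimates (the $\chi^2_d$ tail for $\mathcal G_1$, the spherical-cap coverage bound for $\mathcal G_2$), all routine; the only loose ends are conventions when $\hat G_M$ or $\tilde G_M$ is empty (take $\dH(\emptyset,G)=+\infty$, absorbed into the failure probability of Theorem~\ref{Theorem2}), a point the paper glosses over as well.
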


\begin{remark}
Suppose that for all $x\in \partial G$, there exist $a,b\in\R^d$ such that $B(a,r)\subseteq G\subseteq B(b,R)$, $x\in B(a,r)$ and $x\in\partial B(b,R)$. In particular, this means that the complement of $G$ has \emph{reach} at least $r$, i.e., one can roll a Euclidean ball of radius $r$ inside $G$ along its boundary (see, e.g., \cite[Definition 11]{Thale2008}). In addition, $G$ can roll freely inside a Euclidean ball of radius $R$, along its boundary. This ensures that for all $u\in\mathbb S^{d-1}$, the random variable $\langle u,X\rangle-h_G(u)$ satisfies the assumption of Theorem \ref{Theorem1BIS} with $\alpha=(d+1)/2$ and some $L>0$ that depends on $r$ and $R$ only. Hence, we are in the case where $\alpha=\beta$ in Theorem \ref{Theorem1BIS}, which shows that the rate of estimation of the support function of $G$ at a single unit vector can be improved by a sublogarithmic factor. However, a close look at the proof of Theorem \ref{Theorem2} suggests that a sublogarithmic factor is still unavoidable in our proof technique, because of the union bound on a covering of the unit sphere.

\end{remark}

\begin{remark}
	Theorem \ref{Theorem2} can be easily extended to cases where the $X_j$'s are not uniformly distributed on $G$. What matters to the proof is that uniformly over unit vectors $u$, the cumulative distribution function $F_u$ of $\langle u,X\rangle-h_G(u)$ increases polynomially near zero. Examples of such distributions are given in \cite{BrunelHausdorff2017}.
\end{remark}

\begin{remark}
	Note that in general, the estimate $\hat h$ defined above is not a support function. In particular, it is not enough to control the differences $\hat h(U_j)-h_G(U_j)$, $ j=1,\ldots,M $ in order to obtain a bound on the Hausdorff distance between $\hat G_M$ and $G$.
\end{remark}

The next theorem gives a lower bound for the minimax risk of estimation $G\in\mathcal K_{r,R}$ that is also polylogarithmic in the sample size.

\begin{theorem}\label{thm:lower}
Let $ r $ and $ R $ be any two positive real numbers satisfying $ R/r \geq 2\sqrt{d} $. For each $ \tau $ in $ (0,1) $, there exist positive constants $ c $ and $ C $ depending only on $ d $, $ \sigma $, $ \tau $, $ r $, and $ R $ such that
\begin{equation*} \inf_{\hat{G}_n}\sup_{G\in\mathcal K_{r,R}}\mathbb{P}_G[\dH(G, \hat{G}_n) > c(\ln n)^{-2/\tau}] \geq C,
\end{equation*} 
and
\begin{equation*} \inf_{\hat{G}_n}\sup_{G\in\mathcal K_{r,R}}\mathbb{E}_G[\dH(G, \hat{G}_n)] \geq C(\ln n)^{-2/\tau},
\end{equation*} 
where the infimum runs over all estimators $ \hat{G}_n $ of $ G $ based on $ Y_1,\dots,Y_n $.
\end{theorem}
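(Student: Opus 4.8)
The plan is to use the standard Le Cam two-point (or, better, Fano-type) reduction for minimax lower bounds, exploiting the fact that Gaussian convolution smooths out local features of the support at an exponential rate, so that two convex bodies differing by a small ``bump'' of size $\delta$ near their boundary produce observation distributions that are indistinguishable from $n$ samples unless $n$ is exponentially large in $1/\delta^{2}$. Concretely, I would fix a base convex body $G_0\in\mathcal K_{r,R}$ (for instance a ball of radius comparable to $r$, or a smoothed body realizing both inclusions with room to spare, using the hypothesis $R/r\ge 2\sqrt d$ to guarantee there is slack), and construct a competitor $G_1=G_1(\delta)$ obtained by ``shaving off'' or ``adding'' a cap of height $\delta$ in a fixed direction $u_0\in\Sd$, arranged so that $G_1$ still satisfies Assumption \ref{A0} with the \emph{same} $r,R$ and so that $\dH(G_0,G_1)\asymp\delta$. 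The key quantitative input is a bound of the form $\mathrm{TV}\big(\PP_{G_0},\PP_{G_1}\big)\le n\cdot\mathrm{TV}\big(P_{G_0}^{(1)},P_{G_1}^{(1)}\big)$ where $P_G^{(1)}=\mathrm{Unif}(G)*\mathcal N(0,\sigma^2 I)$ is the one-sample law, together with the estimate $\mathrm{TV}\big(P_{G_0}^{(1)},P_{G_1}^{(1)}\big)\lesssim_d \exp(-c\,\delta^{-\tau'})$ for an appropriate exponent; choosing $\delta\asymp(\ln n)^{-2/\tau}$ then makes $n\cdot\mathrm{TV}\to 0$, and Le Cam's lemma gives both displayed inequalities with $c$ a suitable multiple of the implied constant in $\dH(G_0,G_1)\asymp\delta$.

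The heart of the argument is the total-variation bound between the two convolved uniform densities, and this is where the role of the exponent $\tau$ enters. Write $p_i = \mathbf 1_{G_i}/|G_i|$; then $P_{G_i}^{(1)}$ has density $q_i=p_i*\phi_\sigma^{\otimes d}$, and $\|q_0-q_1\|_{L^1}\le \||G_1|^{-1}\mathbf 1_{G_1}-|G_0|^{-1}\mathbf 1_{G_0}\|_{L^1}$ is \emph{not} small (it is $\asymp\delta$), so the naive bound fails — one genuinely needs the smoothing. The right approach is to work on the Fourier side: $\widehat{q_0-q_1}(t)=\widehat{(p_0-p_1)}(t)\,e^{-\sigma^2\|t\|^2/2}$, and $p_0-p_1$ is (up to normalization) the indicator of the symmetric-difference cap, whose Fourier transform decays only polynomially, but the Gaussian factor $e^{-\sigma^2\|t\|^2/2}$ forces $\|q_0-q_1\|_{L^2}$ and hence, after a Cauchy–Schwarz / compact-support-type argument or a direct real-variable convexity estimate on the cap, $\|q_0-q_1\|_{L^1}$ to be exponentially small in the inverse width of the cap. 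Since a cap of height $\delta$ on a body with rolling-ball radius $r$ has width $\asymp\sqrt{r\delta}$, one obtains $\|q_0-q_1\|_{L^1}\lesssim_d \exp(-c_d r\delta/\sigma^2)$ or, after optimizing the cap geometry to be as ``thin and tall'' as Assumption \ref{A0} permits (this is where one loses the clean exponent and picks up the parameter $\tau$: making the cap narrower increases its curvature but is constrained by the inner ball of radius $r$), a bound of the form $\exp(-c\,\delta^{-\tau})$ for any $\tau\in(0,1)$ at the cost of the constant $c$. Setting $n\exp(-c\,\delta^{-\tau})\asymp 1$ gives $\delta\asymp (c^{-1}\ln n)^{-1/\tau}$; squaring appears because the natural geometric scale in Hausdorff distance for a cap of width $w$ built against a radius-$r$ ball is $w^{2}/r$, i.e. one should take the cap width $\asymp (\ln n)^{-1/\tau}$, whence $\dH(G_0,G_1)\asymp (\ln n)^{-2/\tau}$.

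I would carry out the steps in this order: (i) fix $G_0$ and, using $R/r\ge2\sqrt d$, verify there is a direction $u_0$ and a cap modification producing $G_1\in\mathcal K_{r,R}$ with $\dH(G_0,G_1)\asymp (\ln n)^{-2/\tau}$; (ii) reduce to the one-sample TV distance via subadditivity of TV along the product structure of $\PP_G$ (Pinsker or the $\mathrm{TV}(\PP^{\otimes n})\le n\,\mathrm{TV}(\PP)$ bound); (iii) prove the exponential TV estimate for the two convolved uniform densities, handling the cap geometry carefully so that the width/height tradeoff yields the exponent $\tau$; (iv) invoke Le Cam's two-point lemma to pass from $\mathrm{TV}(\PP_{G_0},\PP_{G_1})\le 1-C'$ to the stated lower bounds on $\sup_G\PP_G[\dH>c(\ln n)^{-2/\tau}]$ and on the expected Hausdorff risk (the expectation bound follows from the probability bound by Markov, possibly after shrinking $c$). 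The main obstacle is step (iii): getting a clean exponential bound on $\|q_0-q_1\|_{L^1}$ rather than $\|q_0-q_1\|_{L^2}$ requires either controlling the $L^1$ mass of the difference outside a polynomially large ball (where the Gaussian tails dominate) and using Cauchy–Schwarz inside, or a direct pointwise estimate showing $q_0-q_1$ is a smooth function concentrated near the cap with exponentially small amplitude; reconciling this with the geometric constraint that the cap must contain no point forcing $G_1$ out of $\mathcal K_{r,R}$, and tracking how the resulting exponent degrades to an arbitrary $\tau\in(0,1)$, is the delicate part of the analysis.
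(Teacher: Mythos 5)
Your overall framework (two-point Le Cam reduction, tensorization of total variation, Fourier-side analysis of the convolved densities, Markov for the risk bound) matches the paper's, but the construction at the heart of step (iii) does not work, and this is not a repairable detail. If $G_1$ differs from $G_0$ by a single cap, the difference of normalized indicators $p_0-p_1$ has non-negligible \emph{low-frequency} Fourier content: $\mathcal F[p_0-p_1](0)=0$, but the first moment of $p_0-p_1$ is of the order of the cap volume, so $|\mathcal F[p_0-p_1](t)|\asymp |t|\cdot(\text{cap volume})$ near $t=0$. Since $\|q_0-q_1\|_{L^1}\geq\sup_t|\mathcal F[p_0-p_1](t)|\,e^{-\sigma^2\|t\|^2/2}$ and the Gaussian multiplier is $\approx 1$ at low frequencies, the one-sample total variation is only \emph{polynomially} small in the cap height, not exponentially small. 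Gaussian deconvolution only attenuates high frequencies; it cannot hide a one-signed local perturbation. Consequently a single-cap two-point argument yields at best a polynomial-in-$n$ separation, far below the claimed $(\ln n)^{-2/\tau}$.

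The missing idea is to make the perturbation \emph{oscillate}: the paper perturbs the graph of a concave function by $\pm(L/\gamma_m^2)\prod_k\psi(x_k)\sin(\gamma_m x_k)$ with carrier frequency $\gamma_m\asymp m$ and a compactly supported, Gevrey-regular window $\psi$ with $|\mathcal F[\psi](t)|\leq Ce^{-c|t|^\tau}$. The Fourier transform of $\mathbbm 1_{G_{+1}}-\mathbbm 1_{G_{-1}}$ is then concentrated near frequency $\gamma_m$ and is bounded by $e^{-cm^\tau}$ for $\|t\|_\infty\lesssim m^\tau$ (spectral leakage controlled by the window), while the Gaussian factor handles $\|t\|_\infty\gtrsim m^\tau$; this is what produces an exponentially small TV and hence the logarithmic rate. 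Note also that the exponent $\tau<1$ arises from the Paley--Wiener obstruction (a compactly supported smooth window cannot have Fourier decay $e^{-c|t|}$), not from any width/height tradeoff of a cap as you suggest; and the square in $(\ln n)^{-2/\tau}$ comes from the amplitude $L/\gamma_m^2$ forced by the convexity constraint (the second derivative of the oscillation scales as $\gamma_m^2$ times the amplitude and must be dominated by the strictly concave base function), with the Hausdorff separation then recovered from the symmetric-difference separation via the comparison $\mathsf d_\Delta\leq C\,\dH$ on $\mathcal K_{r,R}$.
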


\begin{proof}[Proof of Theorem \ref{thm:lower}]
In the following, we assume that $ c $ and $ C $ are generic positive constants, depending only on $ d $, $ \sigma $, $ \tau $, and $ \delta $.

Let $ \delta > 0 $ be fixed and $ m $ be a positive integer. Let $ \psi $ be chosen as in Lemma \ref{psi_exist} and $ \gamma_m = (4/3)\delta^{-1} \pi m  $. Replacing $ \psi $ by $ x \mapsto 2\delta\psi(x/(2\delta)) $, we can assume that $ \psi $ is supported in the interval $ [-\delta, \delta] $ and $ \inf_{|x| \leq \delta(3/4)}\psi(x) > 0 $. Note that this transformation does not affect the bound on its derivatives \eqref{eq:psi-derivatives} and hence the decay of its Fourier transform remains unchanged.

Define $ h_m(x) = \psi(x)\sin(\gamma_mx) $, $ H_m(x_1,\dots,x_{d-1}) = \prod_{k=1}^{d-1}h_m(x_k) $, and for $ L > 0 $ and $ \omega \in \{-1, +1 \} $, let
\begin{equation*}
b_{\omega}(x_1,\dots,x_{d-1}) = \sum_{k=1}^{d-1} g(x_k) + \omega \;(L/\gamma_m^{2})H_m(x_1,\dots,x_{d-1}),
\end{equation*}
where $ g $ satisfies:
\begin{align}
& \max_{x\in[-\delta, \delta]}g(x) < \frac{\delta}{2(d-1)}, \quad \text{and} \label{eq:prop1} \\
& \max_{x\in[-\delta, \delta]}g^{\prime\prime}(x) < 0, \quad \text{and} \label{eq:prop0} \\
& |\mathcal{F}[g](t)| \leq Ce^{-c|t|^{\tau}}, \quad \text{for some positive constants}\; c \;\text{and}\; C \label{eq:prop2} 
\end{align}
For concreteness, one can take an appropriately scaled Cauchy density, $ g(x) \propto \frac{1}{1+x^2/\delta_0^2} $, which is strictly concave in the region where $ |x| < \delta_0/\sqrt{3} $ and satisfies \eqref{eq:prop1} with $ \delta_0 > \sqrt{3}\delta $ and \eqref{eq:prop2} with $ \tau = 1 $. From the inequality $1+|t|\geq |t|^\tau$,  we have that~\eqref{eq:prop2} is satisfied for all $\tau\in (0,1)$.

By \eqref{eq:prop1} and Lemma \ref{lmm:smooth}, we ensure that the Hessian of $ b_{\omega} $, i.e., $ \nabla ^2 b_{\omega} $, is negative-semidefinite and so that the sets
\begin{equation*} G_{\omega} = \{ (x_1, \dots, x_d)^{\prime} \in [-\delta,\delta]^{d-1}\times \R  : -\delta \leq x_d \leq b_{\omega}(x_1,\dots, x_{d-1})  \}
\end{equation*}
are convex. By choosing $ L < \frac{\gamma^2_m}{2\|\psi\|_{\infty}^{d-1}} $, we have $ (L/\gamma^2_m)|H_m(x_1,\dots,x_{d-1})| \leq L\|\psi\|_{\infty}^{d-1}/\gamma^2_m < \delta/2 $. Combining this with \eqref{eq:prop0}, we have $ |b_{\omega}| \leq \delta $. This means that $ G_{\omega} \subset [-\delta, \delta]^d $, and since $ [-\delta, \delta]^d \subset B_d(0, \sqrt{d}\delta) $, we may take $ R = \sqrt{d}\delta $. Finally, observe that $ B_d(-\delta/2, \delta/2) \subset G_{\omega} $, since the cube $ [-\delta, 0]^d $ is contained in $ G_{\omega} $. Thus, we may take $ r = \delta/2 $. With these choices of $ r $ and $ R $, we have $ G_{\omega} \in \mathcal K_{r,R} $.


Note that $ h_m $ is an odd function about the origin. Thus $ \int_{[-\delta,\delta]^{d-1}}H_m(x)dx= 0 $ because we are integrating an odd function about the origin. Therefore, $ |G_{\omega}| = \delta(2\delta)^{d-1} + (d-1)\int_{[-\delta,\delta]}g(x)dx $.
Also, note that
\begin{align*}  
\mathsf{d}_{\Delta}(G_{+1}, G_{-1})
& = \int_{[-\delta,\delta]^{d-1}}|b_{+1}(x) - b_{-1}(x)|dx \\
& = \dfrac{2L}{\gamma^{2}_m}\int_{[-\delta,\delta]^{d-1}}|H_m(x)|dx \\
& = \dfrac{2L}{\gamma^{2}_m}\prod_{k=1}^{d-1}\int_{[-\delta,\delta]}|\sin(\gamma_mx_k)\psi(x_k)|dx_k.
\end{align*}
The factor $ \prod_{k=1}^{d-1}\int_{[-\delta,\delta]}|\sin(\gamma_mx_k)\psi(x_k)|dx_k $ in the above expression can be lower bounded by a constant, independent of $ m $. In fact,
\begin{align*}
\int_{[-\delta,\delta]}|\sin(\gamma_mx_k)\psi(x_k)|dx_k 
& \geq \int_{|x_k|\leq \delta(3/4)}|\sin(\gamma_mx_k)\psi(x_k)|dx_k \\
& \geq 3\delta/4 \inf_{|x|\leq \delta(3/4)}|\psi(x)|\int_{|x_k|\leq 1}|\sin(\pi m x_k)|dx_k \\
& = 3\delta/\pi \inf_{|x|\leq \delta(3/4)}|\psi(x)|  \\
& > 0.
\end{align*}
Here, we used the fact that
\begin{align*} 
\int_{[-1,1]}|\sin(\pi mx)|dx 
& = 4m\int_{[0,1/(2m)]}|\sin(\pi mx)|dx \\
& = (4/\pi)\int_{[0,\pi/2]}\sin(x)dx \\
& = 4/\pi,
\end{align*}
for any non-zero integer $ m $.
Thus, there exists a constant $ C_1 > 0 $, independent of $ m $, such that
\begin{equation}
\mathsf{d}_{\Delta}(G_{+1}, G_{-1}) \geq \dfrac{C_1}{m^2}.
\end{equation}

For $ \omega = \pm 1 $, define $ f_{\omega} = \mathbbm{1}_{G_{\omega}}/|G_{\omega}| $. Note that for all $ y > 0 $,
\begin{align*}
\text{TV}(\mathbb{P}_{G_{+1}}, \mathbb{P}_{G_{-1}}) & = \frac{1}{2}\int_{\mathbb{R}^d}|(f_{+1}-f_{-1})\ast \phi_{\sigma}(x)|dx \\ & = \frac{1}{2}\int_{\|x\|>y}|(f_{+1}-f_{-1})\ast \phi_{\sigma}(x)|dx + \frac{1}{2}\int_{\|x\|\leq y}|(f_{+1}-f_{-1})\ast \phi_{\sigma}(x)|dx
 \\ & \leq \int_{\|x\|>y}\sup_{z\in[-\delta, \delta]^d}\phi_{\sigma}(x-z)dx + \\ & \qquad \frac{1}{2}\sqrt{|B_d(0,y)|}\sqrt{\int_{\mathbb{R}^d}|\mathcal{F}[f_{+}-f_{-1}](t)\mathcal{F}[\phi_{\sigma}](t)|^2dt}  \\ & \leq C_2e^{-c_2y^2}+C_2y^{d/2}\sqrt{\int_{\mathbb{R}^d}|\mathcal{F}[f_{+}-f_{-1}](t)\mathcal{F}[\phi_{\sigma}](t)|^2dt},
\end{align*}
for some positive constants $ c_2 $ and $ C_2 $ that depend only on $ \delta $, $ \sigma $, and $ d $. Set $ y \propto \sqrt{\log \frac{1}{\int_{\mathbb{R}^d}|\mathcal{F}[f_{+}-f_{-1}](t)\mathcal{F}[\phi_{\sigma}](t)|^2dt}} $ so that $ \text{TV}(\mathbb{P}_{G_{+1}}, \mathbb{P}_{G_{-1}}) $ can be bounded by a fixed power of $ \int_{\mathbb{R}^d}|\mathcal{F}[f_{+}-f_{-1}](t)\mathcal{F}[\phi_{\sigma}](t)|^2dt $.

Split $ \int_{\mathbb{R}^d}|\mathcal{F}[f_{+1}-f_{-1}](t)\mathcal{F}[\phi_{\sigma}](t)|^2dt $ into two integrals with domains of integration $ \|t\|_{\infty} \leq am^{\tau} $ and $ \|t\|_{\infty} > am^{\tau} $. Using the fact that $ \mathcal{F}[\phi_{\sigma}](t) = \sigma^de^{-\sigma^2\|t\|^2_2/2} $, we have
\begin{align*}
\int_{\|t\|_{\infty} > am^{\tau} }|\mathcal{F}[f_{+1}-f_{-1}](t)\mathcal{F}[\phi_{\sigma}](t)|^2dt 
& \leq C_3e^{-c_3m^{2\tau}}.
\end{align*}
By Lemma \ref{bound1}, we have 
\begin{equation*}
|\mathcal{F}[f_{+1}-f_{-1}](t)| \leq Ce^{-cm^{\tau}},
\end{equation*}
whenever $ \|t\|_{\infty} \leq am^{\tau} $.
Thus
\begin{align*}
\int_{\|t\|_{\infty}\leq am^{\tau}}|\mathcal{F}[f_{+1}-f_{-1}](t)\mathcal{F}[\phi_{\sigma}](t)|^2dt & \\
\leq Ce^{-cm^{\tau}}\int_{\mathbb{R}^d}|\mathcal{F}[\phi_{\sigma}](t)|^2dt.
\end{align*}
This shows that 
\begin{equation*}
\text{TV}(\mathbb{P}_{G_{+1}}, \mathbb{P}_{G_{-1}}) \leq C_4e^{-c_4m^{\tau}},
\end{equation*}
for some positive constants $ c_4 $ and $ C_4 $ that depend only on $ d $, $ \sigma $, $ \tau $, and $ \delta $.

The lower bound is a simple two point statistical hypothesis test. By Lemma \ref{measures},
\begin{align*} 
& \inf_{\hat{G}_n}\sup_{G\in\mathcal K_{r,R}}\mathbb{P}_G[C_5\dH(G, \hat{G}_n) > c_5(\ln n)^{-2/\tau}] \geq \\ &\qquad \inf_{\hat{G}_n}\sup_{G\in\mathcal K_{r,R}}\mathbb{P}_G[\mathsf{d}_{\Delta}(G, \hat{G}_n) > c_6(\ln n)^{-2/\tau}].
\end{align*} 
In summary, we have shown that $ \mathsf{d}_{\Delta}(G_{+1}, G_{-1}) \geq \tfrac{C_1}{m^{2}} $ and $ \text{TV}(\mathbb{P}_{G_{+1}}, \mathbb{P}_{G_{-1}}) \leq C_4e^{-c_4m^{\tau}} $, where the constants depend only on $ d $, $ \sigma $, $ \tau $, $ \delta $. Choosing $ m \asymp (\ln n)^{1/\tau} $ and applying Theorem 2.2(i) in \cite{Tsybakov2008} finishes the proof of the lower bound on the minimax probability. To get the second conclusion of the theorem, apply Markov's inequality.
\end{proof}

\section{Discussion}

\subsection{Gap between the lower and upper bounds}


Note that our upper (Corollary \ref{MainCor}) and lower (Theorem \ref{thm:lower}) bounds do not match, in the same way as in \cite[Section 5]{Wasserman2012}. Like these authors, we do not know how to close the gap at the moment. However, both our bounds are very slow: They decay at a polylogarithmic rate. This is not surprising as rates are very slow in general for ill-posed deconvolution problems. However, perhaps more surprisingly, the rates are dimension independent, only the multiplicative constants are exponentially large in $d$ in the upper bound.

\subsection{Other noise distributions}

The techniques that we use here can easily be extended to other noise distributions, provided they are known. Let us look, for instance, at the case when the noise terms are bounded, e.g., uniform in some ball.

Let $G$ satisfy Assumption \ref{A0} and suppose that $ \varepsilon $ is uniformly distributed on the ball $ B(0, Q) $, where $ Q > r $ is known. We can use
\begin{equation*}
\hat{h}_n(u) = \max_{1\leq j \leq n} \langle u, Y_j \rangle - Q
\end{equation*}
to form the truncated estimator $ \tilde{G}_M $ from \eqref{eq:truncated_estimator}.

First note that for all unit vectors $u$, the density $f_{\langle u, \varepsilon \rangle}$ of $\langle u,\varepsilon\rangle$ satisfies 
\begin{equation}  \label{lemmaball}
f_{\langle u, \varepsilon \rangle}(x) \geq C(Q-|x|)^{\frac{d-1}{2}},
\end{equation} 
for all real number $x$ with $ |x| \leq Q $, where $ C $ is a positive constant that does not depend on $u$. This yields the following lemma.

\begin{lemma} \label{lemmaDiscussion}
Let $G$ satisfy Assumption \ref{A0}. For $u\in \mathbb S^{d-1}$ and $x\in\R$, let $ G_u(x) = \mathbb{P}[\langle u, Y \rangle - h_G(u) \leq x] $. There exists a positive constant $ c $ that depends only on $ r $, $ R $, and $ d $ such that for all real numbers $x$ with $ Q-r \leq x \leq Q $,
\begin{equation*}
1 - G_u(x) \geq c (Q-x)^{\frac{3d+1}{2}}.
\end{equation*} 
\end{lemma}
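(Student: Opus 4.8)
The plan is to lower-bound $1-G_u(x)=\mathbb P\big[\langle u,X\rangle-h_G(u)+\langle u,\varepsilon\rangle>x\big]$ by combining the independence of $X$ and $\varepsilon$ with two one-sided polynomial tail estimates: on the $X$-side, Lemma \ref{lemma0} gives $F_u\in\mathcal F(d,1,r,L)$, hence a polynomial lower bound for $\mathbb P[\langle u,X\rangle>h_G(u)-t]$; on the $\varepsilon$-side, \eqref{lemmaball} gives a polynomial lower bound for the density of $\langle u,\varepsilon\rangle$ near $\pm Q$. The idea is that $\langle u,Y\rangle-h_G(u)$ exceeds $x$ as soon as $\langle u,X\rangle$ is sufficiently close to $h_G(u)$ \emph{and} $\langle u,\varepsilon\rangle$ is sufficiently close to $Q$, and each of these two requirements has probability polynomial in the corresponding gap.

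Concretely, I would first discard the trivial case $x=Q$ (both sides vanish), then fix $x$ with $Q-r\le x<Q$ and put $t=(Q-x)/2\in(0,r/2]$. Since the sum $(\langle u,X\rangle-h_G(u))+\langle u,\varepsilon\rangle$ exceeds $-t+(x+t)=x$ whenever $\langle u,X\rangle-h_G(u)>-t$ and $\langle u,\varepsilon\rangle>x+t$, independence gives
$$1-G_u(x)\ \ge\ \mathbb P\big[\langle u,X\rangle-h_G(u)>-t\big]\cdot\mathbb P\big[\langle u,\varepsilon\rangle>x+t\big].$$
Because $t\le r$, Lemma \ref{lemma0} bounds the first factor below by $1-F_u(h_G(u)-t)\ge L^{-1}t^{d}$. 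Because $x+t=(Q+x)/2\le Q$, integrating \eqref{lemmaball} over $[x+t,Q]$ bounds the second factor below by $C\int_{x+t}^{Q}(Q-v)^{(d-1)/2}\,dv=\tfrac{2C}{d+1}(Q-x-t)^{(d+1)/2}=\tfrac{2C}{d+1}\,t^{(d+1)/2}$, where I used $Q-x-t=t$.

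Multiplying the two bounds yields $1-G_u(x)\ge\tfrac{2C}{(d+1)L}\,t^{d+(d+1)/2}=\tfrac{2C}{(d+1)L}\big(\tfrac{Q-x}{2}\big)^{(3d+1)/2}$, i.e.\ the claim with $c=\tfrac{2C}{(d+1)L}\,2^{-(3d+1)/2}$, which is explicit and depends only on $d$, $r$, $R$ (through $L$ and $C$; here $Q$ is a fixed known parameter of the model). I do not expect a genuine obstacle: the only points requiring care are checking the two range conditions used above — namely $t\le r$ so that the polynomial lower bound of Lemma \ref{lemma0} is available, and $x+t\le Q$ so that \eqref{lemmaball} is valid throughout the interval of integration — together with verifying \eqref{lemmaball} itself, which follows since the density of $\langle u,\varepsilon\rangle$ is, up to a normalizing constant, $(Q^2-v^2)^{(d-1)/2}\ge Q^{(d-1)/2}(Q-|v|)^{(d-1)/2}$ on $[-Q,Q]$.
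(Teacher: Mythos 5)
Your proof is correct and follows essentially the same route as the paper: both combine the polynomial lower bound on the tail of $\langle u,X\rangle$ from Lemma \ref{lemma0} with the polynomial lower bound \eqref{lemmaball} on the density of $\langle u,\varepsilon\rangle$ near $Q$. The only difference is that the paper lower-bounds the full convolution integral $\int_{x-Q}^{0}(-t)^d(Q-x+t)^{(d-1)/2}\diff t$, obtaining the constant $CL^{-1}\mathcal B\left(d+1,\frac{d+1}{2}\right)$, whereas you use a product bound over the independent events $\{\langle u,X\rangle-h_G(u)>-t\}$ and $\{\langle u,\varepsilon\rangle>x+t\}$ at the midpoint $t=(Q-x)/2$, which yields the same exponent $\frac{3d+1}{2}$ at the cost of an extra constant factor $2^{-(3d+1)/2}$.
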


\begin{proof}[Proof of Lemma \ref{lemmaDiscussion}]
Suppose $Q-r\leq x\leq Q$ and let $L$ be as in Lemma \ref{lemma0}. Then,
\begin{align*}
1-G_u(x) & = \int_{-\infty}^{0} (1-F_u(t)) f_{\langle u, \varepsilon \rangle}(x-t)dt \\
& \geq \int_{-r}^{0} (1-F_u(t)) f_{\langle u, \varepsilon \rangle}(x-t)dt \\
& \geq CL^{-1}\int_{x-Q}^{0}(-t)^{d}(Q-x+t)^{\frac{d-1}{2}}dt \\
& = CL^{-1}\int_{0}^{Q-x}t^{d}(Q-x-t)^{\frac{d-1}{2}}dt \\
& = CL^{-1}\mathcal B\left(d+1, \frac{d+1}{2}\right)(Q-x)^{\frac{3d+1}{2}},
\end{align*}
where $\mathcal B$ is the Beta function.
\end{proof}

Next, observe that for all $t\geq 0$,
\begin{align*}
\mathbb{P}[ |\hat{h}_n(u)-h_G(u)| > t ] 
& = 1-G_u(Q+t)^n + G_u(Q-t)^n \\
& = G_u(h_G(u)+Q-t)^n \\
& \leq \left(1-ct^{\frac{3d+1}{2}}\right)^n \\
& \leq e^{-cnt^{\frac{3d+1}{2}}},
\end{align*} 
where $\DS c=CL^{-1}\mathcal B\left(d+1,\frac{d+1}{2}\right)$, see Lemma \ref{lemmaDiscussion}. Hence, an adaptation of the proof of Theorem \ref{TheoremMinimax} yields the following.

\begin{theorem}
Let the noise terms $\varepsilon_1,\ldots,\varepsilon_n$ be i.i.d. uniformly distributed in the ball $B(0,Q)$, for some known $Q>0$. Then,

\begin{equation*}
\sup_{G\in\mathcal{K}_{r, R}}\mathbb{E}_G [d_H(G, \tilde{G}_M)] = O((\ln\ln n)n^{-\frac{2}{3d+1}}).
\end{equation*} 

\end{theorem}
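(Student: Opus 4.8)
The plan is to transplant the proofs of Theorem~\ref{Theorem2} and Corollary~\ref{MainCor} to the bounded‑noise setting, the only change being that the Gaussian large‑deviation estimate $c_1e^{-b_nt/(2\sigma^2)}+e^{-c_2n}$ is replaced by the much sharper polynomial‑in‑$n$ tail already established above: uniformly over $u\in\Sd$ and for $0\le t\le r$,
\[
\PP_G\big[\,|\hat h_n(u)-h_G(u)|>t\,\big]\;=\;G_u(h_G(u)+Q-t)^n\;\le\;e^{-cnt^{(3d+1)/2}},
\]
with $c=CL^{-1}\mathcal B\big(d+1,\tfrac{d+1}{2}\big)$ coming from Lemma~\ref{lemmaDiscussion}; for $t>r$ one has the cruder bound $e^{-cnr^{(3d+1)/2}}$, and in every case $|\hat h_n(u)-h_G(u)|\le 2R+2Q$ almost surely, since $\langle u,Y_j\rangle\in[-R-Q,\,h_G(u)+Q]$. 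Two features of this estimate are crucial: the constant $c$ is the same for all directions, and $\hat h_n$ always underestimates $h_G$, so only one‑sided deviations occur.

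With this in hand I would run the three steps behind Theorem~\ref{Theorem2}. First, a union bound over the $M$ directions gives $\PP_G[\max_{1\le j\le M}|\hat h_n(U_j)-h_G(U_j)|>t]\le Me^{-cnt^{(3d+1)/2}}$ for $t\le r$. Second, on the event $\mathcal A$ that $U_1,\dots,U_M$ form a $\delta$‑net of $\Sd$, Lemma~\ref{TDLS} yields, for $t<r/2$,
\[
\PP_G\Big[\dH(\hat G_M,G)>\tfrac{3tR}{r}+4R\delta\Big]\;\le\;Me^{-cnt^{(3d+1)/2}}\;+\;6^d\exp\!\big(-c_3M\delta^{d-1}+d\ln(1/\delta)\big),
\]
where $c_3=(2d\,8^{(d-1)/2})^{-1}$ and the last term bounds $\PP[\mathcal A^c]$ by Lemma~10 of \cite{Brunel2016-2}. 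Third, I would pass to $\tilde G_M$ as in Corollary~\ref{MainCor}: because the noise is bounded, $\|\hat\mu_n\|\le R+Q$ \emph{deterministically}, so for $n$ large and $\eta$ below a fixed threshold the event $\{\dH(\hat G_M,G)\le\eta\}$ forces $\hat G_M\subseteq B(0,R+\eta)\subseteq B(\hat\mu_n,\ln n)$, hence $\tilde G_M=\hat G_M$ and $\dH(\tilde G_M,G)\le\eta$; on the complementary event one has the crude deterministic bound $\dH(\tilde G_M,G)=O(\ln n)$ (both sets lie in a ball of radius $O(\ln n)$ about the origin).

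It then remains to optimize $t$, $\delta$ and $M$. I would take $\delta\asymp t$ of the order of the target rate, i.e.\ $\delta\asymp n^{-2/(3d+1)}$ up to a sub‑polynomial factor; this forces $M$ to be \emph{polynomial} in $n$, of order $M\asymp n^{2(d-1)/(3d+1)}$ up to logarithmic factors, with the leading constant chosen large enough that $c_3M\delta^{d-1}$ dominates $d\ln(1/\delta)+\ln n$ — precisely the role of the constant $A$ in Corollary~\ref{MainCor}. With $\ln M\asymp\ln n$, imposing $cnt^{(3d+1)/2}\gtrsim\ln M+\ln n$ fixes $t$, both probability terms in the display become $O(n^{-1})$, and since $(\ln n)\cdot n^{-1}$ is negligible next to the target rate, taking expectations gives $\sup_{G\in\mathcal K_{r,R}}\E_G[\dH(\tilde G_M,G)]=O\big((\ln\ln n)\,n^{-2/(3d+1)}\big)$. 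The main obstacle, relative to the Gaussian case, is exactly this balancing act: there $M$ and $\delta^{-1}$ are merely polylogarithmic, whereas here $M$ must be polynomial and one must track the logarithmic factors carefully through the three competing constraints — $nt^{(3d+1)/2}\gtrsim\ln M$ for the statistical error, $M\delta^{d-1}\gtrsim\ln(1/\delta)$ with a constant beating $c_3$ for the net, and $\delta,t$ of the order of the target rate — in order to land on the claimed exponent; handling $\tilde G_M$ on the rare failure event is where the (here deterministic) control of $\|\hat\mu_n\|$ enters.
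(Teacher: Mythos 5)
Your overall strategy is exactly the adaptation the paper intends (the paper itself offers no proof beyond ``an adaptation of the proof \dots yields the following''), and the intermediate steps are sound: the one-sided tail bound $e^{-cnt^{(3d+1)/2}}$ uniform in $u$, the union bound over the $M$ directions, Lemma \ref{TDLS} on the event that $U_1,\dots,U_M$ form a $\delta$-net, and the deterministic bound $\|\hat\mu_n\|\leq R+Q$ to dispose of the truncation in \eqref{eq:truncated_estimator}. The gap is in the last sentence. Your own constraints force $cnt^{(3d+1)/2}\gtrsim \ln M+\ln n$ with $M$ polynomial in $n$ (hence $\ln M\asymp\ln n$), so the smallest admissible deviation level $t$ --- and equally the tail integral $\int_0^\infty\min\bigl(1,Me^{-cnt^{(3d+1)/2}}\bigr)\diff t$ that controls the expectation --- is of order
\begin{equation*}
\Bigl(\tfrac{\ln n}{n}\Bigr)^{2/(3d+1)}=(\ln n)^{2/(3d+1)}\,n^{-2/(3d+1)}.
\end{equation*}
Since $(\ln n)^{2/(3d+1)}/\ln\ln n\to\infty$, this is asymptotically \emph{larger} than the claimed $(\ln\ln n)\,n^{-2/(3d+1)}$, so the concluding step ``taking expectations gives $O((\ln\ln n)n^{-2/(3d+1)})$'' does not follow from what precedes it: what you have actually established is the weaker bound $O\bigl(((\ln n)/n)^{2/(3d+1)}\bigr)$.

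Nothing in your argument produces a $\ln\ln n$ factor in place of $(\ln n)^{2/(3d+1)}$; in the Gaussian case that factor arises from the polynomial prefactor $x^{-(\alpha+1)}$ in Lemma \ref{Lemmacdf} and the resulting bias correction in Theorem \ref{Theorem1}, which has no analogue for bounded noise. This discrepancy may well reflect an imprecision in the theorem statement itself, but as a proof of the statement as written your proposal is incomplete: you would need either to eliminate the $\ln M$ cost of the union bound (the events $\{\hat h_n(u)<h_G(u)-t\}$ are not independent, but a plain union bound over a polynomially large net inevitably injects a $\ln n$ into the exponent), or to settle for the rate $((\ln n)/n)^{2/(3d+1)}$.
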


\section{Appendix} \label{SectionProofs}

\subsection{Proof of Corollary \ref{MainCor}}

In the sequel, let $a\in B_d(0,R)$ coming from Assumption \ref{A0}. Note that since $\tilde G_M\subseteq B(\hat \mu_n,\ln n)$ and $G\subseteq B(0,R)$, 
\begin{equation} \label{upperboundd_H}
	\dH(\tilde G_M,G)\leq |\hat\mu_n-a|+\ln n+R\leq |\hat\mu_n-\mu|+\ln n+2R,
\end{equation}
where $\mu$ is the centroid of $G$. Consider the events $\mathcal A$: ``$\hat G_M\neq\emptyset$" and $\mathcal B$: ``$|\hat\mu_n-\mu|\leq 5R$". Write
\begin{equation} \label{ProofMainCor0}
	\E_G[\dH(\tilde G_M,G)]=E_1+E_2+E_3,
\end{equation}
where $E_1=\E_G[\dH(\tilde G_M,G)\mathds 1_{\mathcal A\cap\mathcal B}]$, $E_2=\E_G[\dH(\tilde G_M,G)\mathds 1_{\mathcal A^{\complement}\cap\mathcal B}]$ and $E_3=\E_G[\dH(\tilde G_M,G)\mathds 1_{\mathcal B^{\complement}}]$.
In order to bound $E_1$, let us state the following lemma, which is a simple application of Fubini's lemma.
\begin{lemma}
	Let $Z$ be a nonnegative random variable and $A$ a positive number. Then,
	$$\E[Z\mathds 1_{Z<A}]\leq \int_0^A\PP[Z\geq t]\diff t.$$
\end{lemma}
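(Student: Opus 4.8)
The plan is to use the standard layer-cake (Fubini--Tonelli) representation of a nonnegative random variable and then simply discard the mass lying above the level $A$.

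First I would write, pointwise, the elementary identity $Z=\int_0^\infty \mathds 1_{\{t<Z\}}\diff t$, so that $Z\mathds 1_{\{Z<A\}}=\int_0^\infty \mathds 1_{\{t<Z\}}\mathds 1_{\{Z<A\}}\diff t$. The only observation needed is that $\{t<Z\}\cap\{Z<A\}\subseteq\{t<Z\}\cap\{t<A\}$, whence $\mathds 1_{\{t<Z\}}\mathds 1_{\{Z<A\}}\leq \mathds 1_{\{t<A\}}\,\mathds 1_{\{t<Z\}}$ for every $t\geq 0$. Integrating this in $t$ gives the pointwise bound
\begin{equation*}
Z\mathds 1_{\{Z<A\}}\leq \int_0^A \mathds 1_{\{t<Z\}}\diff t .
\end{equation*}

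Next I would take expectations on both sides and apply Tonelli's theorem (legitimate since the integrand $(\omega,t)\mapsto \mathds 1_{\{t<Z(\omega)\}}$ is nonnegative and jointly measurable) to interchange $\E$ with $\int_0^A$, obtaining $\E[Z\mathds 1_{\{Z<A\}}]\leq \int_0^A \PP[Z>t]\diff t$. Since $\PP[Z>t]\leq \PP[Z\geq t]$ for all $t$, the claimed inequality follows. There is no genuine obstacle here; the only thing to keep in mind is the (trivially satisfied) nonnegativity and measurability hypotheses needed to invoke Tonelli. Alternatively, one can argue directly from $\E[Z\mathds 1_{\{Z<A\}}]=\int_0^\infty \PP[Z\mathds 1_{\{Z<A\}}>t]\diff t$ together with $\{Z\mathds 1_{\{Z<A\}}>t\}\subseteq \{Z\geq t\}$ for $t<A$ and $\{Z\mathds 1_{\{Z<A\}}>t\}=\emptyset$ for $t\geq A$, which yields the same bound.
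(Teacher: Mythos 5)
Your argument is correct and is exactly the ``simple application of Fubini's lemma'' that the paper invokes without writing out: the layer-cake identity $Z=\int_0^\infty \mathds 1_{\{t<Z\}}\diff t$, the inclusion $\{t<Z\}\cap\{Z<A\}\subseteq\{t<A\}\cap\{t<Z\}$, and Tonelli to swap expectation and integral. Nothing is missing.
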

This lemma yields, together with \eqref{upperboundd_H}, with the same notation as in \eqref{EndProofThm},
\begin{align}
	E_1 & \leq \int_0^{\ln n+7R}\PP[\dH(\tilde G_M,G)\geq t] \nonumber \\
	& \leq 4R\delta+\int_0^{\ln n+7R-4R\delta}\PP[\dH(\tilde G_M,G)\geq t+4R\delta] \nonumber \\
	& = 4R\delta+\frac{3R}{r}\int_0^{r(\ln n)/(3R)+7r/3-4r\delta/3}\PP[\dH(\tilde G_M,G)\geq \frac{3Rt}{r}+4R\delta]. \label{ProofMainCor1}
\end{align}
Now, we split the last integral in \eqref{ProofMainCor1} in two terms: First, the integral between $0$ and $r/2$, where we can apply \eqref{EndProofThm}, and then between $r/2$ and $r(\ln n)/(3R)+7r/3-4r\delta/3$, where we bound the probability term by the value it takes for $t=r/2$. This yields
\begin{equation} \label{ProofE_1}
	E_1\leq \frac{C_1\ln \ln n}{\sqrt{\ln n}},
\end{equation}
for some positive constant $C_1$ that depends neither on $n$ nor on $G$. For $E_2$, note that if $\mathcal A$ is not satisfied, then $\tilde G_M=\{\hat\mu_n\}$ and $\dH(\tilde G_M,G)\leq |\hat\mu_n-\mu|+2R$, which is bounded from above by $7R$ is $\mathcal B$ is satisfied. Hence,
\begin{align}
	E_2 & \leq 7R\PP[\hat G_M=\emptyset] \nonumber \\
	& \leq 7R\PP[a\notin\hat G_M] \nonumber \\
	& = 7R\PP[\exists j=1,\ldots,M: \hat h(U_j)<\langle U_j,a\rangle] \nonumber \\
	& \leq 7RM\PP[\hat h(U_1)<\langle U_1,a\rangle] \nonumber \\
	& \leq 7RM\PP[\hat h(U_1)<h_G(U_1)-r/2] \nonumber \\
	& \leq 7RMc_1e^{-\frac{b_n r/2}{2\sigma^2}}+e^{-c_2 n} \nonumber
\end{align}
by \eqref{Proof2_1}.
Hence, 
\begin{equation} \label{ProofE_2}
	E_2\leq \frac{C_2\ln\ln n}{\sqrt{\ln n}},
\end{equation}
where $C_2$ is a positive constant that depends neither on $n$ nor on $G$. Now, using \eqref{upperboundd_H},
\begin{equation} \label{ProofMainCor3}
	E_3\leq \E_G\left[\left(|\hat\mu_n-\mu|+\ln n+2R\right)\mathds 1_{|\hat\mu_n-\mu|>R}\right].
\end{equation}
To bound the latter expectation from above, we use the following lemma, which is also an direct application of Fubini's lemma.
\begin{lemma}
	Let $Z$ be a nonnegative random variable and $A$ a positive number. Then,
	$$\E[Z\mathds 1_{Z>A}]\leq A+\int_A^\infty \PP[Z\geq t]\diff t.$$
\end{lemma}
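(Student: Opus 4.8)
The plan is to reduce the inequality to Tonelli's theorem via the layer-cake (distribution function) representation of $Z$, in direct parallel with the companion lemma bounding $\E[Z\mathds 1_{Z<A}]$ stated just above it. First I would write, for the nonnegative random variable $Z$,
\[
Z=\int_0^\infty \mathds 1_{\{Z>t\}}\,\diff t,
\]
multiply through by $\mathds 1_{\{Z>A\}}$, and split the resulting integral at $t=A$:
\[
Z\,\mathds 1_{\{Z>A\}}=\int_0^A \mathds 1_{\{Z>A\}}\mathds 1_{\{Z>t\}}\,\diff t+\int_A^\infty \mathds 1_{\{Z>A\}}\mathds 1_{\{Z>t\}}\,\diff t.
\]

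Next I would simplify each piece. On $\{t<A\}$ one has $\{Z>A\}\subseteq\{Z>t\}$, so $\mathds 1_{\{Z>A\}}\mathds 1_{\{Z>t\}}=\mathds 1_{\{Z>A\}}$ and the first integral equals $A\,\mathds 1_{\{Z>A\}}$. On $\{t\ge A\}$ I would simply discard the factor $\mathds 1_{\{Z>A\}}$ and enlarge $\mathds 1_{\{Z>t\}}\le\mathds 1_{\{Z\ge t\}}$. Taking expectations and exchanging $\E$ with the $t$-integral (legitimate by Tonelli, since every integrand is nonnegative and jointly measurable) gives
\[
\E\!\left[Z\,\mathds 1_{\{Z>A\}}\right]\le A\,\PP[Z>A]+\int_A^\infty \PP[Z\ge t]\,\diff t,
\]
and bounding $\PP[Z>A]\le 1$ finishes the proof.

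There is no real obstacle: the argument is a few lines of bookkeeping. The only points worth a moment's care are (i) keeping track of strict versus non-strict inequalities in the indicators, so that the tail term comes out as $\int_A^\infty\PP[Z\ge t]\,\diff t$ (which is the form needed to control $E_3$), and (ii) noting that the layer-cake identity and the use of Tonelli require nothing beyond $Z\ge 0$ and a positive $A$. An equivalent route, if preferred, is the pointwise identity $Z\,\mathds 1_{\{Z>A\}}=A\,\mathds 1_{\{Z>A\}}+(Z-A)_+$ combined with $\E[(Z-A)_+]=\int_0^\infty\PP[Z>A+s]\,\diff s=\int_A^\infty\PP[Z>t]\,\diff t$.
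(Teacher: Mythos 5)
Your proof is correct, and it is exactly the ``direct application of Fubini's lemma'' that the paper invokes without writing out: the layer-cake identity, a split of the $t$-integral at $A$, and Tonelli. The sharper intermediate bound $A\,\PP[Z>A]+\int_A^\infty\PP[Z\ge t]\,\diff t$ you obtain before bounding $\PP[Z>A]\le 1$ is a harmless bonus.
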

Hence, \eqref{ProofMainCor3} yields
\begin{equation} \label{ProofMainCor4}
	E_3\leq (\ln n+3R)\PP[|\hat\mu_n-\mu|>5R]+\int_{5R}^\infty \PP[|\hat\mu_n-\mu|\geq t]\diff t.
\end{equation}

We now use the following lemma.

\begin{lemma} \label{lemmaConcentra}
	For all $t\geq 5R$,
	$$\PP[|\hat\mu_n-\mu|>t]\leq 6^d e^{-9nt^2/200}.$$
\end{lemma}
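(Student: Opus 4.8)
The plan is to bound the deviation of the empirical mean $\hat\mu_n = \frac1n\sum_j Y_j$ from the true mean $\mu = \E_G[Y]$ (note $\E_G[Y] = \E_G[X] = \mu$, the centroid of $G$, since the noise is centered), coordinate by coordinate, and then take a union bound over a covering of the unit sphere to pass from coordinates to the Euclidean norm. Write $Y_j = X_j + \varepsilon_j$ with $X_j \in G \subseteq B(0,R)$ and $\varepsilon_j \sim \mathcal N(0,\sigma^2 I)$. Then $\hat\mu_n - \mu = \frac1n\sum_j (X_j - \mu) + \frac1n\sum_j \varepsilon_j$. For a fixed unit vector $u$, the variable $\langle u, X_j - \mu\rangle$ is bounded in $[-2R, 2R]$ (since $\mu \in G \subseteq B(0,R)$ and $X_j \in B(0,R)$), hence $2R$-sub-Gaussian by Hoeffding, while $\langle u, \varepsilon_j\rangle \sim \mathcal N(0,\sigma^2)$ is $\sigma$-sub-Gaussian; so $\langle u, \hat\mu_n - \mu\rangle$ is the average of $n$ i.i.d. $\sqrt{4R^2+\sigma^2}$-sub-Gaussian variables, giving $\PP[\langle u, \hat\mu_n - \mu\rangle > s] \leq e^{-ns^2/(2(4R^2+\sigma^2))}$ for all $s>0$.

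Next I would convert this directional control into control of $|\hat\mu_n - \mu|$. Let $\mathcal N$ be a $(1/2)$-net of $\Sd$ of cardinality at most $6^d$ (such a net exists, e.g. by Lemma 10 in \cite{Brunel2016-2} or a standard volumetric argument). A standard inequality gives $|v| \leq 2\max_{u\in\mathcal N}\langle u, v\rangle$ for every $v\in\R^d$. Hence $\PP[|\hat\mu_n - \mu| > t] \leq \PP[\exists u \in \mathcal N : \langle u, \hat\mu_n - \mu\rangle > t/2] \leq 6^d e^{-nt^2/(8(4R^2+\sigma^2))}$ by the union bound. To reach the stated form with the explicit constant $9/200$, I would use $t \geq 5R$ together with the assumption $\sigma^2 \leq r^2 \leq R^2$ implicit from Assumption \ref{A0} and the regime $R/r$ considered, so that $4R^2 + \sigma^2 \leq 5R^2 \leq t^2/5$, which upgrades the exponent to $t^2/(8\cdot t^2/5)\cdot(\dots)$; a slightly more careful bookkeeping (splitting the $5R^2$ more tightly, or absorbing the polynomial-in-$R$ slack using $t\geq 5R$) yields $9nt^2/200$. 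The precise constant is not essential — any fixed positive multiple of $nt^2/R^2$ would suffice for Corollary \ref{MainCor} — but I would track it to match the statement.

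The main obstacle, such as it is, is purely cosmetic: getting the explicit constants ($6^d$ and $9/200$) rather than unspecified absolute constants. The substance — sub-Gaussianity of bounded-plus-Gaussian summands and a net argument on the sphere — is routine. One subtlety worth stating carefully is that the bound must hold uniformly over $G \in \mathcal K_{r,R}$, which it does: the sub-Gaussian parameter depends only on $R$ and $\sigma$, not on the shape of $G$, and $\mu \in B(0,R)$ regardless. I would also note that the restriction $t \geq 5R$ is exactly what lets one absorb the $6^d$ prefactor harmlessly in the downstream integral $\int_{5R}^\infty$, so the lemma is stated in precisely the form needed.
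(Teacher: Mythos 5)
Your proposal is correct in substance and follows essentially the same route as the paper: a $(1/2)$-net $\mathcal N$ of $\Sd$ of cardinality at most $6^d$, the reduction $|v|\leq 2\max_{u\in\mathcal N}\langle u,v\rangle$, a union bound, and a directional exponential-moment bound obtained by splitting $\langle u,Y_1-\mu\rangle$ into its bounded part (using $|X_1-\mu|\leq 2R$ a.s.) and its Gaussian part. The only difference is that you control the bounded part via Hoeffding's lemma (getting a genuine $\sqrt{4R^2+\sigma^2}$-sub-Gaussian parameter), whereas the paper bounds its moment generating function crudely by $e^{2R\lambda/n}$ and only then uses $t\geq 5R$; both work. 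One caveat: your route to the literal constant $9/200$ invokes $\sigma^2\leq R^2$, which is nowhere assumed in the paper ($\sigma^2$ is an arbitrary fixed positive number), so that step is not justified. But the exponent as stated in the lemma, carrying no $\sigma^2$, cannot hold for arbitrary $\sigma$ in any case, and the paper's own choice $\lambda=3nt/(10\sigma^2)$ leaves a factor $\sigma^{-2}$ in the exponent; the constant should be understood as depending on $\sigma$, and any bound of the form $6^de^{-c\,nt^2}$ with $c$ depending on $R$ and $\sigma$ suffices for the use in Corollary \ref{MainCor}, so this is a cosmetic defect shared with the paper rather than a gap in your argument.
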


\begin{proof}[Proof of Lemma \ref{lemmaConcentra}]
	Let $\mathcal N$ be a $(1/2)$-net of the unit sphere. Let $u\in\Sd$ such that $|\hat \mu_n-\mu|=\langle u,\hat\mu_n-\mu\rangle$. Let $u^*\in\mathcal N$ such that $|u^*-u|\leq 1/2$. Then, by Cauchy-Schartz inequality,
\begin{align*}
	\langle u^*,\mu\rangle & \geq \langle u,\hat\mu_n-\mu\rangle - (1/2)|\hat\mu_n-\mu| \\
	& = \frac{1}{2}|\hat\mu_n-\mu|.
\end{align*}
Hence,
\begin{align}
	\PP[|\hat\mu_n-\mu|>t] & \leq \PP[\exists u^*\in\mathcal N: \langle u^*,\hat\mu_n-\mu\rangle \geq t/2] \nonumber \\
	& \leq 6^d \max_{u\in\mathcal N}\PP[\langle u,\hat\mu_n-\mu\rangle \geq t/2] \nonumber \\
	& \leq 6^d \max_{u\in\Sd}\PP[\langle u,\hat\mu_n-\mu\rangle \geq t/2].	\label{ProofMainCor40}
\end{align}
Let $u\in\Sd$. Then, by Markov's inequality, and using the fact that $|X_1-\mu|\leq 2R$ almost surely, for all $\lambda>0$, 
\begin{align*}
	\PP[\langle u,\hat \mu_n-\mu\rangle \geq t/2] & \leq \E\left[e^{\frac{\lambda\langle u,Y_1-\mu\rangle}{n}}\right]^n e^{-\lambda t/2} \\
	& \leq \E\left[e^{\frac{\lambda\langle u,X_1-\mu\rangle}{n}}\right]^n \E\left[e^{\frac{\lambda\langle u,\varepsilon_1\rangle}{n}}\right]^n e^{-\lambda t/2} \\ 
	& \leq e^{2R\lambda+\lambda^2\sigma^2/(2n)}e^{-\lambda t/2}.	
\end{align*}
Choosing $\lambda=\frac{3nt}{10\sigma^2}$ and plugging in \eqref{ProofMainCor40} yields the desired result.
\end{proof}

Applying Lemma \ref{lemmaConcentra} to \eqref{ProofMainCor4} entails 
\begin{equation} \label{ProofE_3}
	E_3\leq \frac{C_3\ln\ln n}{\sqrt{\ln n}}.
\end{equation}
Applying \eqref{ProofE_1}, \eqref{ProofE_2} and \eqref{ProofE_3} to \eqref{ProofMainCor0} ends the proof of the corollary. \hfill \textsquare

\subsection{Intermediate lemmas and their proofs}\label{SectionProofLemmas}

\paragraph{Proof of Lemma \ref{Lemmacdf}:}

Without loss of generality, let us assume that $\theta_F=0$. For all $x\in\R$,
\begin{equation} \label{Proof1_1}
	1-G(x) = \int_{-\infty}^0 \left(1-F(t)\right)\frac{e^{\frac{(x-t)^2}{2\sigma^2}}}{\sqrt{2\pi\sigma^2}}\diff t.
\end{equation}
Let us split the latter integral into two parts: Denote by $I_1$ the integral between $-\infty$ and $-r$ and by $I_2$ the integral between $-r$ and $0$, so $1-G(x)=I_1+I_2$.

Let $x> 0$. First, using the assumption about $F$, one has:
\begin{align*}
	I_1 & = \int_0^r \left(1-F(-t)\right)\frac{e^{-\frac{(x+t)^2}{2\sigma^2}}}{\sqrt{2\pi\sigma^2}}\diff t \nonumber \\
	& \leq \frac{L}{\sqrt{2\pi\sigma^2}}\int_0^r t^\alpha e^{-\frac{(x+t)^2}{2\sigma^2}} \diff t \nonumber \\
	& = \frac{Le^{-\frac{x^2}{2\sigma^2}}}{\sqrt{2\pi\sigma^2}}\int_0^r t^\alpha e^{\frac{-xt}{\sigma^2}}e^{\frac{-t^2}{2\sigma^2}} \diff t \nonumber \\
	& \leq \frac{L\sigma^{2\alpha+2}e^{-\frac{x^2}{2\sigma^2}}}{x^{\alpha+1}\sqrt{2\pi\sigma^2}}\int_0^{rx/\sigma^2} t^\alpha e^{-t} \diff t \nonumber \\
	& \leq \frac{L\Gamma(\alpha+1)\sigma^{2\alpha+1}e^{-\frac{x^2}{2\sigma^2}}}{x^{\alpha+1}\sqrt{2\pi}},
\end{align*}
where $\Gamma$ is Euler's gamma function. Hence, 
\begin{equation} \label{Proof1_2}
	I_1\leq \frac{C'e^{-\frac{x^2}{2\sigma^2}}}{x^{\alpha+1}},
\end{equation}
where $\DS C'= \frac{L\Gamma(\alpha+1)\sigma^{2\alpha+1}}{\sqrt{2\pi}}$ is a positive constant.
On the other hand, if $x\geq \sigma^2/r$,
\begin{align*}
	I_1 & = \int_0^r \left(1-F(-t)\right)\frac{e^{-\frac{(x+t)^2}{2\sigma^2}}}{\sqrt{2\pi\sigma^2}}\diff t \nonumber \\
	& \geq \frac{L^{-1}}{\sqrt{2\pi\sigma^2}}\int_0^r t^\alpha e^{-\frac{(x+t)^2}{2\sigma^2}} \diff t \nonumber \\
	& = \frac{L^{-1}e^{-\frac{x^2}{2\sigma^2}}}{\sqrt{2\pi\sigma^2}}\int_0^r t^\alpha e^{\frac{-xt}{\sigma^2}}e^{\frac{-t^2}{2\sigma^2}} \diff t \nonumber \\
	& \geq \frac{L^{-1}\sigma^{2\alpha+2}e^{-\frac{r^2}{2\sigma^2}}e^{-\frac{x^2}{2\sigma^2}}}{x^{\alpha+1}\sqrt{2\pi\sigma^2}}\int_0^{rx/\sigma^2} t^\alpha e^{-t} \diff t \nonumber \\
	& \geq \frac{L^{-1}e^{-\frac{r^2}{2\sigma^2}}\sigma^{2\alpha+2}e^{-\frac{x^2}{2\sigma^2}}}{x^{\alpha+1}\sqrt{2\pi\sigma^2}}\int_0^1 t^\alpha e^{-t} \diff t.
\end{align*}
Hence, 
\begin{equation} \label{Proof1_3}
	I_1\geq \frac{ce^{-\frac{x^2}{2\sigma^2}}}{x^{\alpha+1}},
\end{equation}
where $\DS c = \frac{L^{-1}e^{-\frac{r^2}{2\sigma^2}}\sigma^{2\alpha+2}}{\sqrt{2\pi\sigma^2}}\int_0^1 t^\alpha e^{-t} \diff t$ is a positive constant.

Now, we bound the nonnegative integral $I_2$ from above. Using the fact that $1-F(u)\leq u$ for all $u\in\R$,
\begin{align*}
	I_2 & = \int_r^{\infty} \left(1-F(-t)\right)\frac{e^{-\frac{(x+t)^2}{2\sigma^2}}}{\sqrt{2\pi\sigma^2}}\diff t \\
	& \leq \int_r^{\infty} \frac{e^{-\frac{(x+t)^2}{2\sigma^2}}}{\sqrt{2\pi\sigma^2}}\diff t \\
	& = \frac{e^{-\frac{x^2}{2\sigma^2}}}{\sqrt{2\pi\sigma^2}}\int_r^{\infty}e^{-\frac{xt}{\sigma^2}}e^{-\frac{t^2}{2\sigma^2}}\diff t \\
	& \leq e^{-\frac{x^2}{2\sigma^2}}e^{-\frac{xr}{\sigma^2}}\int_r^{\infty}\frac{e^{-\frac{t^2}{2\sigma^2}}}{\sqrt{2\pi\sigma^2}}\diff t \\
	& = \frac{1}{2}e^{-\frac{x^2}{2\sigma^2}}e^{-\frac{xr}{\sigma^2}}.
\end{align*}

Since for all $t\geq 0$, $\displaystyle{e^{-t}t^{\alpha+1}\leq\left(\frac{\alpha+1}{e}\right)^{\alpha+1}}$,
\begin{equation} \label{Proof1_4}
	I_2\leq \frac{C''e^{-\frac{x^2}{2\sigma^2}}}{x^{\alpha+1}},
\end{equation}
with $C''$ being the positive constant
\begin{equation*}
	C''=\frac{\sigma^{2\alpha+2}}{2r^{\alpha+1}}\left(\frac{\alpha+1}{e}\right)^{\alpha+1}.
\end{equation*}

Hence, \eqref{Proof1_2}, \eqref{Proof1_3} and \eqref{Proof1_4} yield
\begin{equation} \label{Proof1_5}
	\frac{ce^{-\frac{x^2}{2\sigma^2}}}{x^{\alpha+1}}\leq 1-G(x)\leq (C'+C'')\frac{e^{-\frac{x^2}{2\sigma^2}}}{x^{\alpha+1}},
\end{equation}
for all $x\geq \sigma^2/r$. This proves Lemma \ref{Lemmacdf}. \hfill \textsquare

\paragraph{Proof of Lemma \ref{lemma0}:}

Let $u\in\Sd$. For $t\geq 0$, denote by $C_G(u,t)=\{x\in G:\langle u,x\rangle\geq h_G(u)-t\}$. Then, for all $t\geq 0$, $\DS 1-F_u(t)=\frac{|C_G(u,t)|}{|G|}$. Let $x^*\in G$ such that $\langle u,x^*\rangle=h_G(u)$: $G$ has a supporting hyperplane passing through $x^*$ that is orthogonal to $u$.

By Assumption \ref{A0}, there is a ball $B=B(a,r)$ included in $G$. Consider the section $B_u$ of $B$ passing through $a$, orthogonal to $u$: $B_u=B\cap (a_u^{\perp})$. Denote by $\textsf{cone}$ the smallest cone with apex $x^*$ that contains $B_u$. Then, for all $t\in [0,r]$, $|C_G(u,t)|\geq |C_{\textsf{cone}}(u,t)|=\left(\frac{r}{\ell}\right)^{d-1}\frac{\kappa_{d-1}t^d}{d}$, where $\ell=\langle u,x^*-a\rangle$. Since $G\subseteq B(0,R)$ by Assumption \ref{A0}, $\ell\leq 2R$ and since $B(a,r)\subseteq G$, $|G|\geq r^d\kappa_d$, which altogether proves the lower bound of Lemma \ref{lemma0}. For the upper bound, note that Assumption \ref{A0} implies that $G$ can be included in a hypercube with edge length $2R$ that has one of its $(d-1)$-dimensional faces that contains $x^*$ and is orthogonal to $u$. Hence, $|C_G(u,t)|\leq 2Rt$, for all $t\in [0,2R]$. This proves the upper bound of Lemma \ref{lemma0}.

\begin{lemma} \label{measures}
If $ G $ and $ G^{\prime} $ are convex sets satisfying Assumption \ref{A0}, then there exists a constant $ C $ that depends only on $d$ and $R$ such that
\begin{equation*}
\mathsf{d}_{\Delta}(G, G^{\prime}) \leq C\dH(G, G^{\prime}).
\end{equation*}
\end{lemma}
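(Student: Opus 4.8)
The plan is to pass from the Hausdorff bound to an inclusion of parallel bodies and then estimate the volume of the resulting shell using classical facts about convex bodies. Set $\delta=\dH(G,G')$. Since $G$ and $G'$ are nonempty and both contained in $B_d(0,R)$, we have $\delta\leq 2R$. By definition of the Hausdorff distance, $G\subseteq G'+\delta B_d(0,1)$ and $G'\subseteq G+\delta B_d(0,1)$, so
\begin{equation*}
\mathsf{d}_{\Delta}(G,G')=|G\Delta G'|\leq\bigl|(G'+\delta B_d(0,1))\setminus G'\bigr|+\bigl|(G+\delta B_d(0,1))\setminus G\bigr|,
\end{equation*}
and it therefore suffices to show that $\bigl|(K+\delta B_d(0,1))\setminus K\bigr|\leq C_0\delta$ for every convex body $K\subseteq B_d(0,R)$ and every $\delta\in(0,2R]$, with a constant $C_0=C_0(d,R)$.

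To prove this, I would use that $s\mapsto|K+sB_d(0,1)|$ is differentiable on $(0,\infty)$ with derivative equal to the surface area $S(K+sB_d(0,1))$ of the $s$-parallel body (immediate from Steiner's formula, since $K+sB_d(0,1)+tB_d(0,1)=K+(s+t)B_d(0,1)$), together with monotonicity of surface area under inclusion of convex bodies: since $K+sB_d(0,1)\subseteq B_d(0,R+s)$, we get $S(K+sB_d(0,1))\leq S(B_d(0,R+s))=d\kappa_d(R+s)^{d-1}$. Hence
\begin{equation*}
\bigl|(K+\delta B_d(0,1))\setminus K\bigr|=\int_0^{\delta}S(K+sB_d(0,1))\,\diff s\leq\int_0^{\delta}d\kappa_d(R+s)^{d-1}\,\diff s\leq d\kappa_d(3R)^{d-1}\delta,
\end{equation*}
using $s\leq\delta\leq 2R$ in the last step. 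Plugging this back yields $\mathsf{d}_{\Delta}(G,G')\leq 2d\kappa_d(3R)^{d-1}\dH(G,G')$, which is the claim. An equivalent route expands $|K+\delta B_d(0,1)|$ directly via Steiner's formula and bounds each intrinsic volume $V_j(K)$ by $V_j(B_d(0,R))$ using monotonicity of intrinsic volumes, combined with $\delta^{d-j}\leq(2R)^{d-j-1}\delta$ for $j\leq d-1$.

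The argument is essentially routine, so the only ``obstacle'' is bookkeeping: one must make sure the constant depends on $d$ and $R$ only and not on the inner radius $r$. This is precisely why the shell volume should be controlled by surface-area (or intrinsic-volume) monotonicity against the circumscribed ball $B_d(0,R)$, rather than, for instance, by dilating $K$ about its incenter, which would bring in an unwanted factor of $r$. Note that nothing beyond the inclusion $G,G'\subseteq B_d(0,R)$ coming from Assumption \ref{A0} is actually used.
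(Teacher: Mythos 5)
Your argument is correct. Note, however, that the paper does not actually prove this lemma: it simply defers to Lemma~2 of the cited reference \cite{Brunel2013}, so there is no in-paper argument to compare against. Your self-contained derivation is a valid substitute and is the standard way such a bound is obtained: the inclusion $G\Delta G'\subseteq\bigl((G+\delta B_d(0,1))\setminus G\bigr)\cup\bigl((G'+\delta B_d(0,1))\setminus G'\bigr)$ with $\delta=\dH(G,G')$ reduces everything to a shell-volume estimate, and your control of the shell via $\frac{\diff}{\diff s}|K+sB_d(0,1)|=S(K+sB_d(0,1))\leq S(B_d(0,R+s))$ (Steiner's formula plus monotonicity of surface area under inclusion of convex bodies) gives the constant $2d\kappa_d(3R)^{d-1}$, which indeed depends only on $d$ and $R$. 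Two small points worth keeping explicit if this were written up: the infimum in the definition of $\dH$ is attained for compact sets, so the inclusions really do hold with $\varepsilon=\delta$; and the monotonicity of surface area (equivalently of the quermassintegrals) under inclusion is itself a classical but nontrivial fact about convex bodies that should be cited. Your remark that only the outer inclusion $G,G'\subseteq B_d(0,R)$ from Assumption~\ref{A0} is used, so that the constant is free of $r$, is accurate and consistent with the statement of the lemma.
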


\begin{proof}[Proof of Lemma \ref{measures}]
See Lemma 2 in \cite{Brunel2013}.
\end{proof}

\begin{lemma} \label{bound1}
Let $ G_{+1} $ and $ G_{-1} $ be the two convex sets from Theorem \ref{thm:lower}. There exists constants $ a > 0 $, $ c > 0 $ and $ C > 0 $, depending only on $ d $, $ \tau $, and $ \delta $, such that if $ \|t\|_{\infty} \leq am^{\tau} $, then
\begin{equation*}  |\mathcal{F}[\mathbbm{1}_{G_{+1}} - \mathbbm{1}_{G_{-1}}](t)| \leq Ce^{-cm^{\tau}}.
\end{equation*}
\end{lemma}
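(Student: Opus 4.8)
The plan is to compute $\mathcal{F}[\mathbbm{1}_{G_{+1}}-\mathbbm{1}_{G_{-1}}]$ in closed form, expand it into an absolutely convergent series whose terms factor over the $d-1$ horizontal coordinates, and control each term using that $\psi$ — hence every power $\psi^{2j+1}$ — has a Fourier transform decaying like $e^{-c|\cdot|^{\tau}}$, evaluated at the frequencies of order $\gamma_m$ (which is a fixed multiple of $m$, hence much larger than $m^{\tau}$) that are forced by the oscillation $\sin(\gamma_m\cdot)$. First I would fix $t=(t',t_d)\in\R^{d-1}\times\R$, set $s(x')=\sum_{k=1}^{d-1}g(x_k)$ so that $b_{\omega}(x')=s(x')+\omega(L/\gamma_m^{2})H_m(x')$, and integrate out the last coordinate over $[-\delta,b_{\omega}(x')]$. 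Subtracting the $\omega=+1$ and $\omega=-1$ integrands gives
\[
\mathcal{F}[\mathbbm{1}_{G_{+1}}-\mathbbm{1}_{G_{-1}}](t)=\int_{[-\delta,\delta]^{d-1}}e^{i\langle t',x'\rangle}\!\int_{b_{-1}(x')}^{b_{+1}(x')}\!e^{it_dx_d}\,dx_d\,dx'=\frac{2}{t_d}\int_{[-\delta,\delta]^{d-1}}e^{i\langle t',x'\rangle}e^{it_ds(x')}\sin\!\Big(\tfrac{L\,t_d}{\gamma_m^{2}}H_m(x')\Big)dx',
\]
the value at $t_d=0$ being recovered by continuity since the integrand is $O(\gamma_m^{-2}|H_m(x')|)$ for every $t_d$.

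Since $\big|\tfrac{L t_d}{\gamma_m^{2}}H_m(x')\big|\le \tfrac{L}{\gamma_m^{2}}|t_d|\,\|\psi\|_{\infty}^{d-1}$ is bounded uniformly on the cube, I would expand $\sin$ in its Taylor series and interchange sum and integral. Because $H_m(x')^{2j+1}=\prod_{k=1}^{d-1}h_m(x_k)^{2j+1}$ and $e^{it_ds(x')}=\prod_{k=1}^{d-1}e^{it_dg(x_k)}$ both factor over coordinates, the cube integral factors and
\[
\mathcal{F}[\mathbbm{1}_{G_{+1}}-\mathbbm{1}_{G_{-1}}](t)=2\sum_{j\ge0}\frac{(-1)^{j}(L/\gamma_m^{2})^{2j+1}\,t_d^{2j}}{(2j+1)!}\prod_{k=1}^{d-1}I_j(t_k,t_d),\qquad I_j(s,u):=\int_{-\delta}^{\delta}e^{isx}e^{iug(x)}\psi(x)^{2j+1}\sin(\gamma_mx)^{2j+1}dx.
\]
Writing $\sin(\gamma_mx)^{2j+1}=(2i)^{-(2j+1)}\sum_{l=0}^{2j+1}(-1)^{l}\binom{2j+1}{l}e^{i(2j+1-2l)\gamma_mx}$, whose frequencies are odd multiples of $\gamma_m$ and hence of modulus $\ge\gamma_m$, expresses $I_j(t_k,t_d)$ as a combination, with coefficients summing in modulus to $1$, of the numbers $\mathcal{F}[e^{it_dg(\cdot)}\psi^{2j+1}](t_k+(2j+1-2l)\gamma_m)$. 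Choosing $a$ so small that $am^{\tau}\le\gamma_m/2$ for all $m\ge1$ (possible since $\tau<1$ and $\gamma_m$ is a fixed multiple of $m$), every one of these evaluation points has modulus at least $\gamma_m/2$, of order $m$.

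The crux is a decay estimate, uniform in $j$ and $u$: there are constants $\Lambda_0,\Gamma_0,c_0>0$ depending only on $d,\tau,\delta$ with $\big|\mathcal{F}[e^{iug(\cdot)}\psi^{2j+1}](\xi)\big|\le\Lambda_0^{\,j+1}e^{\Gamma_0|u|}e^{-c_0|\xi|^{\tau}}$ for $|\xi|\ge1$. This follows from derivative bounds. The bound \eqref{eq:psi-derivatives} makes $\psi$ of Gevrey class $1/\tau$, i.e.\ $|\psi^{(p)}|\le\Lambda_{\psi}A_{\psi}^{p}(p!)^{1/\tau}$; expanding $\partial^{N}(\psi^{2j+1})$ by the multinomial formula and using $\prod_i n_i!\le n!$ together with the monotonicity of $x\mapsto x^{1/\tau-1}$ (valid since $\tau\le1$) shows $\psi^{2j+1}$ is again Gevrey $1/\tau$ with the \emph{same} rate $A_{\psi}$ up to an absolute doubling and a prefactor $\Lambda_{\psi}(4\Lambda_{\psi}^{2})^{j}$. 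For the factor $e^{iug}$, the key point is that $g$ (a fixed scaled Cauchy density, with poles only at $\pm i\delta_0$, $\delta_0>\sqrt3\,\delta$ fixed) is holomorphic on a fixed disk around $[-\delta,\delta]$ and $e^{iug}$ extends holomorphically there with $|e^{iug}|\le e^{\Gamma_0|u|}$, so Cauchy's estimates give $|\partial^{p}(e^{iug})|\le e^{\Gamma_0|u|}A_g^{\,p}p!$ with rate $A_g$ \emph{independent of $u$} — only the prefactor carries $u$. Multiplying the two Gevrey functions (once more using $\prod n_i!\le n!$) leaves the rate bounded by a fixed constant $R$ with prefactor $\Lambda_0^{j+1}e^{\Gamma_0|u|}$, after which the standard estimate $|\mathcal{F}[F](\xi)|\le 2\delta\,\Lambda\inf_{N\ge0}(R/|\xi|)^{N}(N!)^{1/\tau}\lesssim\Lambda\,e^{-c_0|\xi|^{\tau}}$ for $F$ supported in $[-\delta,\delta]$ (with $c_0$ depending only on $R,\tau$) finishes the claim.

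Finally I would assemble. Applying this with $|\xi|\ge\gamma_m/2$, which is at least a fixed multiple of $m$, yields $|I_j(t_k,t_d)|\le\Lambda_0^{\,j+1}e^{\Gamma_0|t_d|}e^{-c'm^{\tau}}$ for a constant $c'=c'(d,\tau,\delta)>0$, hence $\prod_{k=1}^{d-1}|I_j(t_k,t_d)|\le\Lambda_0^{(j+1)(d-1)}e^{(d-1)\Gamma_0|t_d|}e^{-(d-1)c'm^{\tau}}$. Shrinking $a$ once more so that $\Gamma_0 a\le c'/2$ and using $|t_d|\le am^{\tau}$ collapses the two $m^{\tau}$-exponentials to $e^{-cm^{\tau}}$ with $c:=\tfrac12(d-1)c'$, and plugging into the series of the second paragraph,
\[
\big|\mathcal{F}[\mathbbm{1}_{G_{+1}}-\mathbbm{1}_{G_{-1}}](t)\big|\le 2e^{-cm^{\tau}}\sum_{j\ge0}\frac{(L/\gamma_m^{2})^{2j+1}|t_d|^{2j}\Lambda_0^{(j+1)(d-1)}}{(2j+1)!}=2e^{-cm^{\tau}}\,\frac{L\Lambda_0^{d-1}}{\gamma_m^{2}}\cdot\frac{\sinh\sqrt{w}}{\sqrt{w}},
\]
with $w=(L/\gamma_m^{2})^{2}|t_d|^{2}\Lambda_0^{d-1}\lesssim m^{2\tau-4}\to0$; the last factor is thus $\le2$ for $m$ large and the whole bound is $\le C\gamma_m^{-2}e^{-cm^{\tau}}\le Ce^{-cm^{\tau}}$, while the finitely many remaining small $m$ are absorbed into $C$ because $|\mathbbm{1}_{G_{+1}}-\mathbbm{1}_{G_{-1}}|\le1$ is supported in $[-\delta,\delta]^{d}$. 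The real obstacle is the crux estimate: one must bound the Fourier decay of $e^{iug}\psi^{2j+1}$ uniformly in the power $2j+1$ and in $|u|$ up to order $m^{\tau}$, and in particular observe that the exponential inflation $e^{\Gamma_0|u|}$ from $e^{iug}$ affects only the prefactor and not the Gevrey rate, and is therefore defeated by choosing the constant $a$ small enough; the remainder is bookkeeping.
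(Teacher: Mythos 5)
Your argument is correct, and its skeleton is the same as the paper's: integrate out $x_d$ to produce the factor $\sin\bigl((Lt_d/\gamma_m^2)H_m(x')\bigr)/t_d$, Taylor-expand the sine so the cube integral factors over the $d-1$ coordinates, expand $\sin^{2j+1}(\gamma_m x)$ into complex exponentials so that every evaluation point of the one-dimensional Fourier transforms is shifted by an odd multiple of $\gamma_m$ and hence has modulus at least $\gamma_m/2\gg m^{\tau}$, defeat the $e^{O(|t_d|)}$ inflation by shrinking $a$, and resum the series via $\sinh$. The one genuine divergence is how you prove the crux estimate $|\mathcal{F}[e^{it_dg}\psi^{2j+1}](\xi)|\leq B^{2j+1}e^{O(|t_d|)}e^{-c|\xi|^{\tau}}$ with a rate uniform in $j$. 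The paper expands $e^{it_dg}=\sum_{\ell}(it_dg)^{\ell}/\ell!$ and applies the convolution bound of Lemma \ref{psi_decay} to each product $g^{\ell}\psi^{2j+1}$, which is exactly where hypothesis \eqref{eq:prop2} (the stretched-exponential Fourier decay of $g$) is used. You instead stay on the physical side: you deduce from \eqref{eq:psi-derivatives} that $\psi^{2j+1}$ is Gevrey of order $1/\tau$ with a $j$-\emph{independent} rate (via $\prod_i n_i!\leq N!$ and a count of compositions, which correctly pushes all $j$-dependence into the prefactor), use analyticity of the Cauchy density to get Cauchy estimates for $e^{iug}$ with a $u$-independent rate and an $e^{\Gamma_0|u|}$ prefactor, and then invoke the standard Gevrey-to-Fourier-decay bound for compactly supported functions. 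Both routes deliver the two facts the argument actually needs, namely a decay rate uniform in $j$ and an inflation in $|t_d|$ confined to the prefactor; the paper's version is shorter because it recycles Lemma \ref{psi_decay} and only requires the Fourier decay \eqref{eq:prop2} of $g$, whereas yours bypasses \eqref{eq:prop2} entirely but leans on $g$ being analytic in a complex neighborhood of $[-\delta,\delta]$ — true for the Cauchy choice made in the paper, but a strictly stronger requirement on $g$ than \eqref{eq:prop2}. Your final $\sinh\sqrt{w}/\sqrt{w}$ bookkeeping checks out as written.
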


\begin{proof}[Proof of Lemma \ref{bound1}]
The ideas we use here are inspired by the proof of Theorem 8 in \cite{Wasserman2012}. Let $ t = (t_1,\dots,t_{d})^{\prime} $ belong to the product set
\begin{equation*}
[-\gamma_m/2, \gamma_m/2]^{d-1} \times [-am^{\tau}, am^{\tau}].
\end{equation*}
Note that
\begin{align}
& \mathcal{F}[\mathbbm{1}_{G_{+1}} - \mathbbm{1}_{G_{-1}}](t) \nonumber \\
& = \int_{[-\delta,\delta]^{d-1}}e^{i(t_1x_1+\cdots+t_{d-1}x_{d-1})}\dfrac{ e^{ib_{+1}(x_1,\dots,x_{d-1})t_d} - e^{ib_{-1}(x_1,\dots,x_{d-1})t_d}}{it_d}dx \nonumber \\
& = 2\int_{[-\delta,\delta]^{d-1}}e^{i(t_1x_1+\cdots+t_{d-1}x_{d-1})}e^{it_d\sum_{k=1}^{d-1}g(x_k)}\dfrac{\sin((Lt_d/\gamma^2_m)H(x))}{t_d}dx \nonumber \\
& = 2\sum_{j=0}^{\infty}\dfrac{(Lt_d/\gamma^2_m)^{2j+1}(-1)^j}{t_d(2j+1)!}\prod_{k=1}^{d-1}\int_{\mathbb{R}}e^{it_kx_k}e^{it_dg(x_k)}h^{2j+1}(x_k)dx_k \nonumber \\
& = 2\sum_{j=0}^{\infty}\dfrac{(Lt_d/\gamma^2_m)^{2j+1}(-1)^j}{t_d(2j+1)!}\prod_{k=1}^{d-1}(\mathcal{F}[\sin^{2j+1}(\gamma_mx_k)e^{it_dg(x_k)}\psi^{2j+1}(x_k)])(t_k) \label{eq:Fourier_main}.
\end{align}
Next, write
\begin{align*}
\sin^{2j+1}(\gamma_mx_k)  & = \left(\dfrac{e^{ix_k\gamma_m}-e^{-ix_k\gamma_m}}{2i}\right)^{2j+1} \\
& =  \left(\dfrac{1}{2i}\right)^{2j+1}\sum_{s=0}^{2j+1}\tbinom{2j+1}{s}(-1)^{s}e^{-ix_kw_s},
\end{align*}
where $ w_s = \gamma_m(2s-2j-1) $.

Using this expression and linearity of the Fourier transform, we can write
\begin{align*}
& (\mathcal{F}[\sin^{2j+1}(\gamma_mx_k)e^{it_dg(x_k)}\psi^{2j+1}(x_k)])(t_k) \\ & = 
\left(\dfrac{1}{2i}\right)^{2j+1}\sum_{s=0}^{2j+1}\tbinom{2j+1}{s}(-1)^s(\mathcal{F}[e^{it_dg(x_k)-ix_kw_s}\psi^{2j+1}(x_k)])(t_k) \\ &
= \left(\dfrac{1}{2i}\right)^{2j+1}\sum_{s=0}^{2j+1}\tbinom{2j+1}{s}(-1)^s(\mathcal{F}[e^{it_dg(x_k)}\psi^{2j+1}(x_k)])(t_k-w_s),
\end{align*}
and hence by the triangle inequality,
\begin{align}
& |(\mathcal{F}[\sin^{2j+1}(\gamma_mx_k)e^{it_dg(x_k)}\psi^{2j+1}(x_k)])(t_k)| \nonumber \\ 
& \leq \left(\dfrac{1}{2}\right)^{2j+1}\sum_{s=0}^{2j+1}\tbinom{2j+1}{s}|\mathcal{F}[e^{it_dg(x_k)}\psi^{2j+1}(x_k)](t_k-w_s)| \label{eq:Fourier_inequality}.
\end{align}
The function $
x \mapsto e^{i t_dg(x)} $ can be expanded as
\begin{equation*}
\sum_{\ell=0}^{\infty}\tfrac{(it_dg(x))^\ell}{\ell!},
\end{equation*} 
and hence
\begin{equation} \label{eq:inequalitysum}
|\mathcal{F}[e^{it_dg(x_k)}\psi^{2j+1}(x_k)](t_k-w_s)| \leq \sum_{\ell=0}^{\infty}\tfrac{|t_d|^\ell}{\ell!}|\mathcal{F}[g^\ell(x_k)\psi^{2j+1}(x_k)](t_k-w_s)|.
\end{equation}
By \eqref{eq:prop2}, $ g $ is chosen so that its Fourier transform has the same decay as the Fourier transform of $ \psi $. We deduce from Lemma \ref{psi_decay} that there exists constants $ c > 0 $ and $ B > 0 $, indepenent of $ j $ and $ \ell $, such that
\begin{equation*} 
|\mathcal{F}[g^\ell(x_k)\psi^{2j+1}(x_k)](t_k-w_s)| \leq B^{\ell+2j+1}e^{-c|t_k-w_s|^{\tau}}.
\end{equation*}
Applying this inequality to each term in the sum in \eqref{eq:inequalitysum} and summing over $ \ell $, we find that
\begin{equation*} 
|\mathcal{F}[e^{it_dg(x_k)}\psi^{2j+1}(x_k)](t_k-w_s)| \leq B^{2j+1}e^{B|t_d|-c|t_k-w_s|^{\tau}}.
\end{equation*} 
Since we restricted the $ t_k $ ($ k = 1,\dots,d-1 $) to be in the interval $ [-\gamma_m/2, \gamma_m/2] $, it follows that $ |t_k - w_s| \geq \gamma_m/2 $. Hence if $ \|t\|_{\infty} \leq am^{\tau} $, then
\begin{equation*}
|\mathcal{F}[e^{it_dg(x_k)}\psi^{2j+1}(x_k)](t_k-w_s)| \leq B^{2j+1}e^{Bam^{\tau}-c\gamma_m^{\tau}/2}.
\end{equation*}
Set $ a = c\gamma^{\tau}_m/(4Bm^{\tau}) $, which is independent of $ m $. Thus there exists a positive constant $ c_1 $ such that
\begin{equation}  \label{eq:inequality2}
|\mathcal{F}[e^{it_dg(x_k)}\psi^{2j+1}(x_k)](t_k-w_s)| \leq B^{2j+1}e^{-c_1m^{\tau}}.
\end{equation}
Finally, we apply the inequality \eqref{eq:inequality2} to each term in the sum in \eqref{eq:Fourier_inequality} and use the identity $ \left(\tfrac{1}{2}\right)^{2j+1}\sum_{s=0}^{2j+1}\tbinom{2j+1}{s} = 1 $ which yields
\begin{align} \label{eq:inequality3}
|(\mathcal{F}[\sin^{2j+1}(\gamma_mx_k)e^{it_dg(x_k)}\psi^{2j+1}(x_k)])(t_k)|
& \leq B^{2j+1}e^{-c_1m^{\tau}}.
\end{align}
Returning to \eqref{eq:Fourier_main}, we can use \eqref{eq:inequality3} to arrive at the bound 
\begin{equation*}
|\mathcal{F}[\mathbbm{1}_{G_{+1}} - \mathbbm{1}_{G_{-1}}](t)| \leq 2e^{-c_1(d-1)m^{\tau}}\sum_{j=0}^{\infty}\dfrac{(L|t_d|B^{d-1}/\gamma^2_m)^{2j+1}}{|t_d|(2j+1)!}.
\end{equation*}
Note that $ \sum_{j=0}^{\infty}\dfrac{(L|t_d|B^{d-1}/\gamma^2_m)^{2j+1}}{|t_d|(2j+1)!} $ is further bounded by
\begin{equation*}
LB^{d-1}(1/\gamma^2_m)\sinh(L|t_d|B^{d-1}/\gamma^2_m)
\end{equation*}
since
\begin{align*}
\sum_{j=0}^{\infty}\dfrac{(L|t_d|B^{d-1}/\gamma^2_m)^{2j+1}}{|t_d|(2j+1)!}
& = LB^{d-1}(1/\gamma^2_m)\sum_{j=0}^{\infty}\dfrac{(L|t_d|B^{d-1}/\gamma^2_m)^{2j}}{(2j+1)!}  \\
& \leq
LB^{d-1}(1/\gamma^2_m)\sum_{j=0}^{\infty}\dfrac{(L|t_d|B^{d-1}/\gamma^2_m)^{2j}}{(2j)!} \\
& = LB^{d-1}(1/\gamma^2_m)\sinh(L|t_d|B^{d-1}/\gamma^2_m).
\end{align*}
The last term is bounded by a constant since $ |t_d| \leq am^{\tau} = O(\gamma^2_m) $.
\end{proof}

\begin{lemma}\label{psi_decay}
Let $ \{\psi_j\} $ be a sequence of real-valued functions on $ \mathbb{R} $. Suppose there exists positive constants $ C > 0 $ and $ c > 0 $ such that 
\begin{equation*}
|\mathcal{F}[\psi_j](t)| \leq Ce^{-c|t|^{\tau}}, \quad j = 1, 2, \dots
\end{equation*}
for all $ t \in \mathbb{R} $,
where $ \tau \in (0,1] $. Then for all $ t \in \mathbb{R} $,
\begin{equation} \label{eq:Fourierconv}
|\mathcal{F}[\prod_{1 \leq j\leq k}\psi_j](t)| \leq C^kB^{k-1}e^{-c|t|^{\tau}/2}, \quad k = 1, 2, \dots,
\end{equation}
where $ B = \int_{\mathbb{R}}e^{-c|s|^{\tau}/2}ds $.
\end{lemma}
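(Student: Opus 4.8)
The plan is to argue by induction on $k$, turning the product on the spatial side into a convolution on the Fourier side and then exploiting the subadditivity of $x\mapsto x^{\tau}$ on $[0,\infty)$, which holds precisely because $\tau\in(0,1]$. The role of the factor $B$ will be to absorb one integration at each step of the induction, and the halving of the decay exponent (from $c|t|^\tau$ to $c|t|^\tau/2$) will turn out to be exactly the price paid for splitting off one factor.

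The base case $k=1$ is immediate, since $|\mathcal{F}[\psi_1](t)|\le Ce^{-c|t|^{\tau}}\le Ce^{-c|t|^{\tau}/2}=C^{1}B^{0}e^{-c|t|^{\tau}/2}$. For the inductive step I would set $\Psi_{k-1}=\prod_{1\le j\le k-1}\psi_j$ and invoke the convolution theorem for the transform $\mathcal{F}[f](t)=\int e^{itx}f(x)\,dx$ used in the paper,
\begin{equation*}
\mathcal{F}\Bigl[\prod_{1\le j\le k}\psi_j\Bigr](t)=\frac{1}{2\pi}\bigl(\mathcal{F}[\psi_k]\ast\mathcal{F}[\Psi_{k-1}]\bigr)(t)=\frac{1}{2\pi}\int_{\mathbb{R}}\mathcal{F}[\psi_k](s)\,\mathcal{F}[\Psi_{k-1}](t-s)\,ds,
\end{equation*}
which is legitimate under mild integrability of the $\psi_j$ — in the application (inside the proof of Lemma~\ref{bound1}) every factor inherits the compact support of $\psi$, so there is nothing to check. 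Taking absolute values and inserting the hypothesis on $\mathcal{F}[\psi_k]$ together with the inductive bound on $\mathcal{F}[\Psi_{k-1}]$, the right-hand side would be at most
\begin{equation*}
\frac{C^{k}B^{k-2}}{2\pi}\int_{\mathbb{R}}e^{-c|s|^{\tau}}e^{-c|t-s|^{\tau}/2}\,ds.
\end{equation*}

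The one substantive point is the bookkeeping of the exponents. From the triangle inequality $|t|\le|s|+|t-s|$ and $(a+b)^{\tau}\le a^{\tau}+b^{\tau}$ for $a,b\ge0$, $\tau\in(0,1]$, one obtains $|t|^{\tau}\le|s|^{\tau}+|t-s|^{\tau}$, hence
\begin{equation*}
c|s|^{\tau}+\tfrac{c}{2}|t-s|^{\tau}=\tfrac{c}{2}|s|^{\tau}+\tfrac{c}{2}\bigl(|s|^{\tau}+|t-s|^{\tau}\bigr)\ge\tfrac{c}{2}|s|^{\tau}+\tfrac{c}{2}|t|^{\tau}.
\end{equation*}
Plugging this in, the integral above is bounded by $e^{-c|t|^{\tau}/2}\int_{\mathbb{R}}e^{-c|s|^{\tau}/2}\,ds=Be^{-c|t|^{\tau}/2}$, where $B<\infty$ because $\tau>0$. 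Collecting constants then gives $|\mathcal{F}[\prod_{1\le j\le k}\psi_j](t)|\le(2\pi)^{-1}C^{k}B^{k-1}e^{-c|t|^{\tau}/2}\le C^{k}B^{k-1}e^{-c|t|^{\tau}/2}$, closing the induction.

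I do not expect a genuine obstacle here. The only step that looks delicate is the choice to split $c|s|^{\tau}$ into two halves — one half to tame the convolution variable, the other to combine with the half-strength decay $e^{-c|t-s|^{\tau}/2}$ coming from the inductive hypothesis and thereby regenerate $e^{-c|t|^{\tau}/2}$ via subadditivity of $x\mapsto x^{\tau}$; this is what forces the exponent in the conclusion to be $c/2$ rather than $c$. The remaining item needing a word of care is the justification of the convolution identity, which is harmless given the compact supports present in the application.
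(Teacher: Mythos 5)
Your proof is correct and follows essentially the same route as the paper's: induction on $k$, the product-to-convolution identity on the Fourier side, and the subadditivity $|t|^{\tau}\le|s|^{\tau}+|t-s|^{\tau}$ with the exponent split into two halves to regenerate $e^{-c|t|^{\tau}/2}$ and leave an integrable factor producing $B$. If anything, your bookkeeping is slightly more careful than the paper's (you retain the $1/(2\pi)$ normalization and state the base case explicitly), so there is nothing to correct.
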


\begin{proof}[Proof of Lemma \ref{psi_decay}]
We will proof the claim using induction. To this end, suppose \eqref{eq:Fourierconv} holds. Then, using the fact that the Fourier transform of a product is the convolution of the individual Fourier transforms, we have
\begin{align*}
|\mathcal{F}[\prod_{1 \leq j\leq k+1}\psi_j](t)| 
& = |\mathcal{F}[\prod_{1 \leq j\leq k}\psi_j]\ast \mathcal{F}[\psi_{k+1}](t)| \\
& = \left|\int_{\mathbb{R}}\mathcal{F}[\prod_{1 \leq j\leq k}\psi_j](s)\mathcal{F}[\psi_{k+1}](t-s)ds\right| \\
& \leq \int_{\mathbb{R}}|\mathcal{F}[\prod_{1 \leq j\leq k}\psi_j](s)\mathcal{F}[\psi_{k+1}](t-s)|ds \\
& \leq C^{k+1}B^{k-1}\int_{\mathbb{R}}e^{-c|s|^{\tau}/2 - c|t-s|^{\tau}}ds.
\end{align*}
Next, note that the mapping $ x \mapsto |x|^{\tau} $ is H\"older continuous in the sense that 
\begin{equation*}
||x|^{\tau}-|y|^{\tau}| \leq |x-y|^{\tau},
\end{equation*}
for all $ x,y $ in $ \mathbb{R} $. Using this, we have that
\begin{equation*}
\int_{\mathbb{R}}e^{-c|s|^{\tau}/2 - c|t-s|^{\tau}}ds \leq e^{-c|t|^{\tau}/2}\int_{\mathbb{R}}e^{-c|s|^{\tau}/2}ds = Be^{-c|t|^{\tau}/2}.
\end{equation*}
Thus we have shown that
\begin{equation*}
|\mathcal{F}[\prod_{1 \leq j\leq k+1}\psi_j](t)| \leq C^kB^{k-1}e^{-c|t|^{\tau}/2}.
\end{equation*}
\end{proof}

\begin{lemma}\label{psi_exist}
Let $ a_1 \geq a_2 \geq \dots $ be a positive sequence with $ \sum_{j=1}^{\infty}a_j = 1 $. There exists a non-negative function $ \psi $ defined on $\R$ that is symmetric (i.e., $\psi(-x)=x$), infinitely many times differentiable, integrates to one (i.e., $ \int_{\mathbb{R}}\psi = 1 $), support equal to $ (-1/2, 1/2) $, and such that
\begin{equation} \label{eq:psi-derivatives}
\sup_{x\in [-1/2, 1/2]}\left|\frac{\diff^k\psi}{\diff x^k}(x)\right| \leq \frac{2^k}{a_1\ldots a_k}, \quad k = 1,2,\dots.
\end{equation}
In particular, for $ \tau \in (0,1) $ and $ a_j = \tfrac{1}{aj^{1/\tau}} $, where $ a = \sum_{j=1}^{\infty}\tfrac{1}{j^{1/\tau}} $, the function $\psi$ satisfies
\begin{equation*}
|\mathcal{F}[\psi](t)| \leq \exp\left\{-\tfrac{1}{e
\tau}\left(\tfrac{|t|}{2a}\right)^{\tau}\right\}, \quad \forall t\in\R.
\end{equation*}
Furthermore, $ \|\psi\|_{\infty} \leq 1 $, $ \|\psi^{\prime}\|_{\infty} \leq 2/(1-\tau) $, and $ \|\psi^{\prime}\|_{\infty} \leq 8/(1-\tau)^2 $.
\end{lemma}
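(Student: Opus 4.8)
The plan is to realise $\psi$ as an infinite convolution of uniform densities. For $j\ge1$ let $u_j=a_j^{-1}\mathbbm{1}_{[-a_j/2,\,a_j/2]}$ be the uniform probability density on the centred interval of length $a_j$, and put $\psi_n=u_1\ast u_2\ast\cdots\ast u_n$. Each $\psi_n$ is a nonnegative, even probability density supported on $\bigl[-\tfrac12\sum_{j\le n}a_j,\ \tfrac12\sum_{j\le n}a_j\bigr]$, with $\mathcal F[\psi_n](t)=\prod_{j\le n}\mathrm{sinc}(a_jt/2)$, where $\mathrm{sinc}(w):=\sin w/w$. Since $\sum_j a_j=1$ (so $\sum_j a_j^2<\infty$), I would first show $(\psi_n)$ is Cauchy in $L^1$: indeed $\psi_{n+1}-\psi_n=\psi_n\ast(u_{n+1}-\delta_0)$, and bounding the last factor by the $L^1$-modulus of continuity of $\psi_n$ over shifts of size $\le a_{n+1}/2$ gives $\|\psi_{n+1}-\psi_n\|_1\le a_{n+1}\|\psi_n'\|_1/2\le a_{n+1}/a_1$, which is summable. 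Hence $\psi:=\lim_n\psi_n$ exists, is a nonnegative, even probability density with $\mathcal F[\psi](t)=\prod_j\mathrm{sinc}(a_jt/2)$ (in particular $\int\psi=\mathcal F[\psi](0)=1$), and is supported in $[-1/2,1/2]$; that its support is exactly $[-1/2,1/2]$ follows either from $\psi$ being even, unimodal with unit mass, or from a Paley--Wiener argument (the product extends to an entire function of exponential type $\tfrac12$).

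For smoothness and the derivative bounds, fix $k$ and split $\psi=u_1\ast\cdots\ast u_k\ast R_k$ with $R_k:=u_{k+1}\ast u_{k+2}\ast\cdots$, again a probability density. Since $u_j'=a_j^{-1}(\delta_{-a_j/2}-\delta_{a_j/2})$ as a distribution, $\psi^{(k)}=u_1'\ast\cdots\ast u_k'\ast R_k$ is a finite signed combination of $2^k$ translates of the smooth density $R_k$, with weights of total mass $\prod_{j\le k}\|u_j'\|_{\mathrm{TV}}=2^k/(a_1\cdots a_k)$. This shows $\psi\in C^\infty$, that $\psi^{(k)}$ is supported in $[-1/2,1/2]$, and that $\|\psi^{(k)}\|_1\le 2^k/(a_1\cdots a_k)$ (since $\|R_k\|_1=1$); the pointwise estimate \eqref{eq:psi-derivatives} is read off from the same representation, and taking $k=0,1,2$ with the particular choice $a_j=(a\,j^{1/\tau})^{-1}$ — recalling $a=\zeta(1/\tau)\asymp(1-\tau)^{-1}$ as $\tau\uparrow1$ — yields the claimed bounds on $\|\psi\|_\infty$, $\|\psi'\|_\infty$ and $\|\psi''\|_\infty$.

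For the Fourier decay, since $\psi\in C_c^\infty$, integrating by parts $k$ times gives $|\mathcal F[\psi](t)|\le |t|^{-k}\|\psi^{(k)}\|_1\le 2^k/(|t|^k a_1\cdots a_k)$ for every $k\ge1$, hence $|\mathcal F[\psi](t)|\le\inf_{k\ge1}2^k/(|t|^k a_1\cdots a_k)$. With $a_j=(a\,j^{1/\tau})^{-1}$ one has $a_1\cdots a_k=a^{-k}(k!)^{-1/\tau}$, so the right-hand side equals $\inf_{k\ge1}\bigl(2a/|t|\bigr)^k(k!)^{1/\tau}$; bounding $k!\le k^k$ and choosing $k$ near $\tfrac1e\bigl(|t|/(2a)\bigr)^\tau$ optimises the exponent and produces exactly $\exp\bigl\{-\tfrac1{e\tau}\bigl(|t|/(2a)\bigr)^\tau\bigr\}$ (valid once $|t|$ is large enough for this $k$ to be a positive integer; for small $|t|$ the bound is trivial from $|\mathcal F[\psi]|\le1$, affecting only constants). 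The step I expect to demand the most care is pinning the \emph{exact} constant $2^k/(a_1\cdots a_k)$ in the pointwise estimate \eqref{eq:psi-derivatives}: distributing the $k$ derivatives onto $u_1,\dots,u_k$ also produces the factor $\|R_k\|_\infty$, which must be kept under control using the monotonicity $a_1\ge a_2\ge\cdots$ and the behaviour of the sup-norm under convolution with a probability density; by contrast the $L^1$ bound above, which is all the Fourier estimate needs, is immediate, and everything downstream is then a routine optimisation.
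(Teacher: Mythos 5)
Your route and the paper's coincide on the Fourier estimate: both integrate by parts $k$ times to get $|t|^k|\mathcal{F}[\psi](t)|\le 2^k/(a_1\cdots a_k)=(2a)^k(k!)^{1/\tau}$ and then optimize $k\approx \tfrac1e(|t|/(2a))^{\tau}$ using $k!\le k^k$. The difference is upstream: the paper does not construct $\psi$ at all---it cites Theorem 1.3.5 of \cite{Hormander1995} for existence and for \eqref{eq:psi-derivatives}---whereas you reprove that theorem, and your construction (the infinite convolution of the uniform densities $u_j$) is exactly H\"ormander's. One genuine advantage of your write-up is that you feed the Fourier identity with $\|\psi^{(k)}\|_1\le\|u_1'\ast\cdots\ast u_k'\|_{\mathrm{TV}}\,\|R_k\|_1=2^k/(a_1\cdots a_k)$, which is immediate from the total-variation computation and bypasses the pointwise bound entirely; the paper implicitly uses the sup-norm bound together with the unit-length support, which gives the same number but rests on the harder estimate.

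The difficulty you flag at the end is real and your sketch does not close it. With your decomposition, $\|\psi^{(k)}\|_\infty\le \tfrac{2^k}{a_1\cdots a_k}\|R_k\|_\infty$, and $\|R_k\|_\infty$ is not $O(1)$: a probability density supported on an interval of length $\sum_{j>k}a_j$ has sup norm at least $\bigl(\sum_{j>k}a_j\bigr)^{-1}\to\infty$. The standard repair (H\"ormander's) is to keep the widest box undifferentiated and differentiate $u_2,\dots,u_{k+1}$, giving $\|\psi^{(k)}\|_\infty\le a_1^{-1}\,2^k/(a_2\cdots a_{k+1})$; in H\"ormander's normalization a leading factor $a_0$ absorbs the extra $a_1^{-1}$, but with the normalization of the lemma ($\sum_{j\ge1}a_j=1$ and support of length exactly $1$) the stated constants cannot all hold verbatim---indeed $\|\psi\|_\infty\le1$ together with $\int\psi=1$ and a support of length $1$ would force $\psi$ to be an indicator. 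So \eqref{eq:psi-derivatives} and the three sup-norm bounds should be read up to extra factors (powers of $a\le(1-\tau)^{-1}$ and of $k$), which affect neither the exponential Fourier decay nor the use made in Lemma \ref{lmm:smooth}; this is a defect of the statement rather than of your argument. A last small point: the displayed Fourier bound is not ``trivial for small $|t|$'' from $|\mathcal{F}[\psi]|\le1$, since its right-hand side is $<1$ for $t\neq0$; like the paper's own optimization, yours only yields it for $|t|\gtrsim 2ae^{1/\tau}$, which is the regime actually used later.
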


\begin{proof}[Proof of Lemma \ref{psi_exist}]
The existence of $ \psi $ can be found in Theorem 1.3.5 of \cite{Hormander1995}. For the second conclusion, note that the identity
\begin{equation*} (-it)^k\mathcal{F}[\psi](t) = \int_{-1/2}^{1/2}e^{itx} \frac{\diff^k\psi}{\diff x^k}(x)dx, \quad k = 1,2,\dots 
\end{equation*}
holds. 
Using this and the upper bound for $\DS \frac{\diff^k\psi}{\diff x^k}$, we see that
\begin{equation*}
|t|^k|\mathcal{F}[\psi](t)| \leq (2a)^k(k!)^{1/\tau}.
\end{equation*}
Next, use the fact that $ k! \leq e^{k\ln k} $ to upper bound $ (2a)^k(k!)^{1/\tau} $ by $ \exp\{k\ln(2a)+(1/\tau)k\ln k\} $.
We have thus shown that 
\begin{equation*}
|\mathcal{F}[\psi](t)| \leq \exp\{k\ln(2a)+(1/\tau)k\ln k\}/|t|^k,
\end{equation*}
for $ t \neq 0 $ and $ k = 1,2,\dots $. Choose $ k = \tfrac{1}{e}\left(\tfrac{|t|}{2a}\right)^{\tau} $ so that
\begin{equation*}
|\mathcal{F}[\psi](t)| \leq \exp\left\{-\tfrac{1}{e
\tau}\left(\tfrac{|t|}{2a}\right)^{\tau}\right\}.
\end{equation*}
The estimates on the $ L_{\infty} $ norms of $ \psi $, $ \psi^{\prime} $, and $ \psi^{\prime\prime} $ follow from the fact that $ a \leq 1/(1-\tau) $.
\end{proof}

\begin{lemma} \label{lmm:smooth}
If $ \max_{x\in[-\delta, \delta]}g^{\prime\prime}(x) < 0 $, there exists $ L > 0 $, depending only on $ \tau $ and $ \gamma_m $, such that the sets $ G_{\omega} $ are convex.
\end{lemma}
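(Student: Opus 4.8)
The plan is to reduce the convexity of $G_\omega$ to the concavity of the profile function $b_\omega$ on the cube $[-\delta,\delta]^{d-1}$, and then to force $\nabla^2 b_\omega\preceq 0$ by choosing $L$ small enough that the perturbation $\omega(L/\gamma_m^2)H_m$ cannot overcome the strict concavity of $\sum_k g(x_k)$. First I would observe that $G_\omega = \big([-\delta,\delta]^{d-1}\times[-\delta,+\infty)\big)\cap\{(x',x_d):x_d\le b_\omega(x')\}$ is the intersection of a convex slab with the hypograph of $b_\omega$; since the hypograph of a concave function is convex and an intersection of convex sets is convex, it suffices to prove that $b_\omega$ is concave on $[-\delta,\delta]^{d-1}$. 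Because $g$ (a scaled Cauchy density), $\psi$, and $\sin$ are all $C^\infty$, the function $b_\omega$ is $C^\infty$, so concavity is equivalent to $\nabla^2 b_\omega(x')\preceq 0$ for every $x'\in[-\delta,\delta]^{d-1}$.

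The second step is to split the Hessian as $\nabla^2 b_\omega(x') = \mathrm{diag}(g''(x_1),\dots,g''(x_{d-1})) + \omega(L/\gamma_m^2)\,\nabla^2 H_m(x')$. By the hypothesis, $\max_{[-\delta,\delta]}g'' =: -\mu$ with $\mu>0$, so the diagonal term satisfies $\mathrm{diag}(g''(x_k))\preceq -\mu I_{d-1}$, where $\mu$ depends only on $d$ and $\delta$ (through the scaling of $g$). For the perturbation, I would write $H_m(x')=\prod_{k=1}^{d-1}h_m(x_k)$ with $h_m=\psi\sin(\gamma_m\cdot)$ and estimate $\|h_m\|_\infty\le\|\psi\|_\infty$, $\|h_m'\|_\infty\le\|\psi'\|_\infty+\gamma_m\|\psi\|_\infty$, $\|h_m''\|_\infty\le\|\psi''\|_\infty+2\gamma_m\|\psi'\|_\infty+\gamma_m^2\|\psi\|_\infty$; by Lemma \ref{psi_exist} (after the rescaling $\psi\mapsto 2\delta\,\psi(\cdot/(2\delta))$ used in the proof of Theorem \ref{thm:lower}) the three $L^\infty$ norms of $\psi$ are finite constants depending only on $\tau$ and $\delta$. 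Using the product rule, every entry of $\nabla^2 H_m$ — diagonal entries $h_m''(x_i)\prod_{k\ne i}h_m(x_k)$ and off-diagonal entries $h_m'(x_i)h_m'(x_j)\prod_{k\ne i,j}h_m(x_k)$ — is then $O(\gamma_m^2)$ uniformly on the cube (for $m$ large enough that $\gamma_m\ge 1$), so $\|\nabla^2 H_m(x')\|_{\mathrm{op}}\le(d-1)\max_{i,j}|\partial_i\partial_j H_m(x')|\le K\gamma_m^2$ for a constant $K=K(d,\tau,\delta)$.

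Combining the two bounds, $\|\omega(L/\gamma_m^2)\nabla^2 H_m(x')\|_{\mathrm{op}}\le KL$, so choosing $L=\mu/(2K)$ (which depends only on $d$, $\tau$, $\delta$ — in particular not on $m$, hence not on $\gamma_m$) gives $\nabla^2 b_\omega(x')\preceq(-\mu+KL)I_{d-1}=-\tfrac{\mu}{2}I_{d-1}\preceq 0$ for all $x'\in[-\delta,\delta]^{d-1}$; hence $b_\omega$ is concave and $G_\omega$ is convex. The only genuinely delicate point — and the reason the construction scales the perturbation by $1/\gamma_m^2$ in the first place — is that the $\gamma_m^2$ growth of $\nabla^2 H_m$ must cancel exactly against the $1/\gamma_m^2$ prefactor, leaving an admissible $L$ that is bounded below uniformly in $m$; everything else is routine bookkeeping of the product-rule derivatives of $H_m$ and the passage from entrywise bounds to the operator norm.
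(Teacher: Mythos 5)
Your proof is correct and follows essentially the same route as the paper: reduce convexity of $G_\omega$ to negative semidefiniteness of $\nabla^2 b_\omega$, bound the entries of $\nabla^2 H_m$ by the product rule together with the $L^\infty$ estimates on $\psi,\psi',\psi''$ from Lemma \ref{psi_exist}, and observe that the $\gamma_m^2$ growth cancels the $1/\gamma_m^2$ prefactor so that a small fixed $L$ suffices. The only (harmless) difference is that your $L=\mu/(2K)$ is uniform in $m$, which is slightly stronger than the lemma's statement allowing $L$ to depend on $\gamma_m$.
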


\begin{proof}
As discussed in the proof of Theorem \ref{thm:lower}, the sets $ G_{\omega} $ are convex if the Hessian of $ b_{\omega} $ is negative-semidefinite. This is equivalent to showing that the largest eigenvalue of $ \nabla ^2 b_{\omega} $ is nonpositive. We can bound the maximum eigenvalue of $ \nabla ^2 b_{\omega} $ via
\begin{align*}
\lambda_{\text{max}}
& = \max_{\|u\|_2 = 1} u^{\prime}\nabla ^2 b_{\omega}u \\ 
& = \max_{\|u\|_2 = 1}[\sum_{k}g^{\prime\prime}(x_k)u^2_k + \sum_{k\ell}\omega(L/\gamma_m^{2})\frac{\partial^2 H_m}{\partial x_k\partial x_{\ell}}(x_1,\dots,x_{d-1})u_ku_{\ell}] \\
& \leq \max_{x\in[-\delta, \delta]}g^{\prime\prime}(x) + (L/\gamma_m^{2})\max\{ \|h_m\|_{\infty}^{d-3}\|h^{\prime}_m\|_{\infty}^2, \|h_m\|_{\infty}^{d-2}\|h^{\prime\prime}_m\|_{\infty} \} \\
& \leq \max_{x\in[-\delta, \delta]}g^{\prime\prime}(x) + (L/\gamma_m^{2})\max\{\|h^{\prime}_m\|_{\infty}^2, \|h^{\prime\prime}_m\|_{\infty} \}
\end{align*}

Now, from Lemma \ref{psi_exist} we have the estimates $ \|\psi\|_{\infty} \leq 1 $, $ \|\psi^{\prime}\|_{\infty} \leq 2/(1-\tau) $, and $ \|\psi^{\prime\prime}\|_{\infty} \leq 8/(1-\tau)^2 $. Thus,
\begin{align*}
|h^{\prime}_m(x)|
& = |\psi^{\prime}(x)\sin(\gamma_m x) - \gamma_m\psi(x)\sin(\gamma_m x)| \\
& \leq 2/(1-\tau) + \gamma_m,
\end{align*}
and
\begin{align*}
|h^{\prime\prime}_m(x)|
& = |\psi^{\prime\prime}(x)\cos(\gamma_m x) -2\gamma_m\psi^{\prime}(x)\sin(\gamma_m x)  - \gamma_m^2\psi(x)\cos(\gamma_m x)| \\
& \leq 8/(1-\tau)^2 + 4\gamma_m/(1-\tau) + \gamma^2_m.
\end{align*}
It thus follows that
\begin{equation*}
\max\{\|h^{\prime}_m\|_{\infty}^2, \|h^{\prime\prime}_m\|_{\infty} \} \leq 8/(1-\tau)^2 + 4\gamma_m/(1-\tau) + \gamma^2_m.
\end{equation*}
Next, choose $ L $, depending only on $ \tau $ and $ \gamma_m $, such that  
\begin{equation*}
(L/\gamma^2_m)[8/(1-\tau)^2 + 4\gamma_m/(1-\tau) + \gamma^2_m] \leq -(1/2)\max_{x\in[-\delta, \delta]}g^{\prime\prime}(x).
\end{equation*}
This means that $ \lambda_{\text{max}} \leq (1/2)\max_{x\in[-\delta, \delta]}g^{\prime\prime}(x) < 0 $.

\end{proof}

\bibliographystyle{plain}
\bibliography{Biblio}

\end{document}